   \newcommand{\g}{\gamma}
  \newcommand{\w}{\omega}
\newcommand{\R}{\mathbb{R}}
\newcommand{\C}{\mathbb{C}}
\newcommand{\GG}{\mathbb{G}}
\newcommand{\EE}{\mathbb{E}}
\newcommand{\OO}{\mathbb{O}}
\newcommand{\PP}{\mathbb{P}}
\newcommand{\U}{\mathbb{U}}
\newcommand{\Z}{\mathbb{Z}}
   \newcommand{\cD}{\mathcal{D}}
   \newcommand{\cF}{\mathcal{F}}
   \newcommand{\cG}{\mathcal{G}}
        \newcommand{\cK}{\mathcal{K}}
    \newcommand{\cH}{\mathcal{H}}
     \newcommand{\cI}{\mathcal{I}}
    \newcommand{\cL}{\mathcal{L}}
  \newcommand{\cN}{\mathcal{N}}
  \newcommand{\cO}{\mathcal{O}}
\renewcommand{\g}{\mathfrak{g}}  % Lie algebra
\newcommand{\h}{\mathfrak{h}}          % Lie subalgebra
\newcommand{\tto}{\rightrightarrows}    % Arrows of a groupoid
\DeclareMathOperator{\Ad}{Ad}           % Adjoint
\newtheorem{theorem}{Theorem}
\newtheorem{lemma}[theorem]{Lemma}
\newtheorem{proposition}[theorem]{Proposition}
\newtheorem{corollary}[theorem]{Corollary}
\theoremstyle{definition}
\newtheorem{definition}[theorem]{Definition}
\newtheorem{example}[theorem]{Example}
\newtheorem{remark}[theorem]{Remark}
\newtheorem{theo}{Theorem}
\begin{document}
%%%%%%%%%%%%%%%%%%%%%%%%%%%%%%%%%%%%%%%%%%
% \title[PMCT 2]{Hilbert V}

% 
\title{Proper Lie groupoids are real analytic}

 \author{David Mart\'inez Torres}
\address{Departamento de Matem\'atica, PUC-Rio, R. Mq. S. Vicente 225, Rio de Janeiro 22451-900, Brazil}
\email{dfmtorres@gmail.com}

\begin{abstract}
We show that any proper Lie groupoid admits a compatible real analytic structure. 
Our proof hinges on a Weyl unitary trick of sorts 
for appropriate local holomorphic groupoids.

\end{abstract}

\maketitle

\section{Introduction and statement of results}

% 
% In connection with Hilbert's fifth problem/Hilbert-Smith conjecture, Illman proved that proper group actions 
% can be made $C^\w$ (real analytic) \cite{Il,Il2}. We prove the same statement for
%  proper Lie groupoids:
% 
% 
% \begingroup
% \setcounter{tmp}{\value{theo}}% store current value of theorem counter
% \setcounter{theo}{0} %assign desired value to theorem counter
% \renewcommand\thetheo{\Alph{theo}}% locally redefine the representation of the theorem counter
% \begin{theo}\label{thm:main}
%  Any proper Lie groupoid $\cG\tto M$  admits a  compatible $C^\w$ structure.
% \end{theo}
% \endgroup
% 
% 
% The technical result behind the proof of Theorem \ref{thm:main} is to show that Zung's iteration method \cite{Zu} is compatible
% with local $C^\w$ structures.  This is done by extending Zung's averaging step 
% to appropriate local holomorphic groupoids. Our technique can be understood as a Weyl unitary trick of sorts for such
% local holomorphic groupoids. 
% Its main ingredient is the introduction of \emph{proper cores}. These are appropriate replacements of the compact form 
% of a complex reductive Lie group whose construction requires basic Kempf-Ness theory for principal bundles.
% 
% 
%  
% 
% The author is grateful to P. Frejlich, E. Miranda and G. Trentinaglia for useful discussions on the subject.
% 
% \subsection{Regularity of group actions}

This paper is concerned with the improvement of regularity of partially defined actions,
a question that can be traced back to Hilbert.

The most common formulation of Hilbert's fifth problem is whether a locally euclidean 
topological group is a Lie group, i.e., whether it possesses a $C^\w$ structure such 
that the group operations become $C^\w$. 
This was answered in the affirmative in the fifties by Gleason, Montgomery and Zippin \cite{Gl,MZ}. The hardest part of their work is to pass from
mere continuity to enough regularity to have a Lie algebra; real analyticity comes as a consequence of the 
Baker-Campbell-Hausdorff formula. 

In Hilbert's time groups were local transformation groups, and therefore 
other formulations of Hilbert's fifth problem exist.
The local Hilbert fifth problem asks whether a locally euclidean topological group is a local Lie group, and it was recently 
answered in the affirmative by Goldbring \cite{Go}. The Hilbert-Smith conjecture asks whether a locally compact
topological group acting effectively on a topological manifold is a Lie group; it has been
recently answered in the affirmative
by Pardon for manifolds of dimension three \cite{Par}. (See \cite{Ta}
for more details on Hilbert's fifth problem and related topics). 

In connection with Hilbert's question, it is natural to ask for sufficient conditions
for a Lie group action
on a manifold to be made $C^\w$. Firstly, a minimal amount of regularity is needed, as
examples do exist from the fifties by Bing and Montgomery and Zippin of
continuous actions of Lie groups which cannot be made $C^1$ \cite{B,MZ2}. Secondly, real analyticity cannot be achieved in general,
as for smooth actions the foliation defined on the regular set of the action cannot have
flat holonomy. The holonomy obstruction is not present for proper actions due to the existence of 
semilocal normal forms coming from Palais' slice Theorem. It was proved in the nineties by Illman that proper $C^1$-actions
can be made $C^\w$ \cite{Il,Il2}.

A Lie group action on a manifold is an example of Lie groupoid. Lie groupoids formalise the notion
of manifolds endowed with partially defined symmetries. 
Besides Lie group actions, other important
examples of Lie groupoids are the fundamental groupoid of a manifold, the holonomy groupoid of a foliation, and
the Lie groupoids associated to (integrable) Poisson and related structures (see \cite{MM,CF} for general theory 
on Lie groupoids
and their connection to foliation theory and Poisson geometry). 
Any Lie groupoid $\cG\tto M$ defines
a possibly non regular foliation on its manifold of base points $M$. This so-called characteristic foliation
is well-behaved for the class of proper Lie groupoids. These are Lie groupoids which generalise 
proper Lie group actions and for 
which  an analogue of Palais' slice Theorem exists \cite{CS,Zu}. Thus, it is natural to ask if a (smooth) proper Lie groupoid
can be given a compatible $C^\w$ structure. Our main result answers this question in the affirmative:

\begin{theo}\label{thm:main}
 Any proper Lie groupoid $\cG\tto M$  admits a  compatible $C^\w$ structure.
\end{theo}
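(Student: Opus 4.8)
The plan is to build the $C^\w$ structure semi-locally, over saturated neighbourhoods of the orbits, and then to glue, imitating Illman's treatment of proper actions with the linearization theorem for proper Lie groupoids playing the role of Palais' slice theorem. Around any orbit $\cO\subset M$, a proper Lie groupoid $\cG\tto M$ is isomorphic, over a saturated neighbourhood, to a saturated open subset of its linear local model $\nu(\cG)\tto\nu(M)$: fixing $x\in\cO$ with isotropy $K=\cG_x$ (which is \emph{compact}, since $\cG$ is proper) and slice representation $N_x$, this local model is built functorially from the compact Lie group $K$, the principal $K$-bundle $\s^{-1}(x)\to\cO$, and the linear $K$-representation $N_x$. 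Because $K$ is compact---so that it carries a canonical $C^\w$ structure, acts real-analytically on any linear representation, and every smooth $K$-manifold admits an invariant $C^\w$ structure---this functorial construction produces a real analytic groupoid. Transporting it through the linearization isomorphism, every point of $M$ acquires a saturated neighbourhood $U$ over which $\cG|_U\tto U$ has a compatible $C^\w$ structure.

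\textbf{Rigidity via complexification.}
To reconcile two such semi-local structures on an overlap, one needs the statement that \emph{a smooth isomorphism between saturated open subsets of linear local models of proper Lie groupoids is automatically real analytic for their canonical $C^\w$ structures}; this is where a Weyl unitary trick enters. The compact group $K$ with its representation $N_x$ extends canonically to the complex reductive group $K_\C$ acting linearly and holomorphically on $(N_x)_\C$, and the associated local holomorphic action groupoid contains the smooth linear model of $\cG$ as a totally real part rigid enough to inherit a canonical real analytic structure. The problem then reduces to extending a smooth isomorphism of two such models to a holomorphic isomorphism of the ambient local holomorphic groupoids: near each orbit the isomorphism restricts to an analytic isomorphism of compact isotropy groups and, after a Bochner linearization over that isotropy, to a linear isomorphism of slice representations, which complexifies canonically; one then propagates these local holomorphic extensions---using the averaging available over the compact isotropy, the ``unitary trick'' for the local holomorphic groupoids---to a global holomorphic extension. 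Granting this, the semi-local $C^\w$ structures on the $\cG|_{U_i}\tto U_i$ are pairwise $C^\w$-compatible over the saturated intersections $U_i\cap U_j$.

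\textbf{Assembly and the main difficulty.}
Finally, cover $M$ by saturated tubes $U_i$ as above and set $\cG_{U_i}:=\s^{-1}(U_i)=\t^{-1}(U_i)$; these are open and, since the $U_i$ are saturated, cover $\cG$. Each $\cG_{U_i}$ and each $U_i$ carries the semi-local $C^\w$ structure of the first step, and by the second step these agree on overlaps, so their maximal atlases assemble into $C^\w$ structures on $\cG$ and on $M$. The structure maps $\s$, $\t$, the unit, the inversion and the partial multiplication are $C^\w$ in every linear local model, and $C^\w$-ness is a local condition, so they are $C^\w$ for the assembled structures---which is the assertion. The whole argument therefore rests on the rigidity step: there is no $C^\w$ partition of unity to interpolate between the non-canonical semi-local structures, so everything turns on showing that the local holomorphic groupoids $K_\C\ltimes(N_x)_\C$ patch, over a neighbourhood of $M$ in a suitable complexification, into a single holomorphic (hence real analytic) groupoid whose real locus is $\cG$, and that the unitary trick transports the smooth gluing cocycle to a holomorphic one coherently. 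Making this precise---in particular controlling how the complexifications of neighbouring linear models overlap---is the technical heart of the proof.
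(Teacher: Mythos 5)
Your overall architecture (semi-local analytic structures from the linearization theorem, then a gluing step identified as the heart of the matter) is reasonable in outline, but the rigidity statement on which your gluing rests is false, and this is a genuine gap rather than a technicality. A smooth isomorphism between saturated open subsets of linear local models of proper Lie groupoids is \emph{not} automatically real analytic for their canonical $C^\w$ structures: already for trivial isotropy (the unit groupoid of a vector space, or a trivial action of a compact $K$) such an isomorphism is an arbitrary diffeomorphism in the transverse/base directions, and no averaging over the compact isotropy, Bochner linearization, or complexification can repair non-analytic behaviour transverse to the orbits. Relatedly, your reduction ``extend the smooth isomorphism to a holomorphic isomorphism of the ambient local holomorphic groupoids'' is circular: a smooth map admits a holomorphic extension only if it is already real analytic, which is precisely what you are trying to prove. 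The unitary trick cannot transport a smooth gluing cocycle to a holomorphic one; in the paper it plays a different role (averaging \emph{holomorphic} data over an s-proper core).

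The paper avoids this obstruction by never comparing two independently constructed canonical structures. Instead it runs a Zorn's lemma extension argument: given a compatible $C^\w$ structure on a saturated open $\cG_V$, it extends it across a boundary leaf by re-doing the linearization around that leaf in a way compatible with the \emph{already existing} analytic structure. Concretely, one takes a slice $\Sigma$ through a point of $\bar V\setminus V$, chooses the Haar density and the almost morphism $\phi:\cG_\Sigma\to G$ to be $C^\w$ on the part over $V$ (using density of analytic equivariant maps), and then invokes Corollary \ref{cor:zunganal}: Zung's iterative averaging converges to a groupoid morphism $\Phi:\cG_\Sigma\to G$ that is real analytic wherever the input data was. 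Proving that compatibility of the iteration with partial analyticity is the real technical content (Sections \ref{sec:core}--\ref{sec:average}): one complexifies the linear model, builds an s-proper core carrying a holomorphic Haar density via Grauert tubes and Kempf--Ness theory, and shows the complexified iterates converge uniformly on a \emph{common} domain to a holomorphic limit restricting to $\Phi$. Finally, Lemma \ref{lem:palais} (a Mostow--Palais embedding plus analytic approximation) extends the analytic structure on the principal-bundle factor $G\ltimes(s^{-1}(\Sigma\cap V)\to\Sigma\cap V)$ to all of $G\ltimes(s^{-1}(\Sigma)\to\Sigma)$, yielding the extension of the $C^\w$ structure to the saturation of $\Sigma$. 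If you want to salvage your approach, you would need to replace your false rigidity claim by exactly this kind of ``extend, don't compare'' mechanism, together with an analyticity-preserving version of the linearization theorem.
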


Theorem \ref{thm:main} has the following immediate consequence:

\begin{corollary}\label{cor:charfol} For any proper Lie groupoid $\cG\tto M$  there exists a compatible
 $C^\w$-structure on $M$ such that the leaves of the characteristic foliation are (embedded) real analytic submanifolds 
 (of possibly varying dimension).
\end{corollary}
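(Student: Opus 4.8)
The plan is to deduce the corollary directly from Theorem \ref{thm:main} by examining the target map on source fibres in the real analytic structure it produces. Fix, by Theorem \ref{thm:main}, a compatible $C^\w$ structure on $\cG\tto M$; in particular $M$ acquires a compatible $C^\w$ structure and the source and target maps $\s,\t\colon\cG\to M$ become real analytic. Because a compatible $C^\w$ structure refines the given smooth structure, $\s$ and $\t$ remain surjective submersions and every rank computation agrees with the smooth one. Recall also the two standard facts I would use about the characteristic (orbit) foliation: its leaves are exactly the orbits $\mathcal{O}_x=\t(\s^{-1}(x))$, $x\in M$; and, $\cG$ being proper, each $\mathcal{O}_x$ is a closed embedded smooth submanifold of $M$, while for \emph{any} Lie groupoid the restriction $\t|_{\s^{-1}(x)}\colon\s^{-1}(x)\to M$ is a surjective submersion onto $\mathcal{O}_x$ and hence a smooth map of constant rank $\dim\mathcal{O}_x$.

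I would then fix $x\in M$ and set $P:=\s^{-1}(x)$. Since $P$ is the preimage of a point under the $C^\w$ submersion $\s$, it is a $C^\w$ submanifold of $\cG$, and $\t|_P\colon P\to M$ is a $C^\w$ map; by the previous paragraph it has constant rank $k:=\dim\mathcal{O}_x$. The key input is the constant rank theorem in the real analytic category: about any $g\in P$ there are a $C^\w$ chart of $P$ and a $C^\w$ chart $(W,\phi)$ of $M$ near $\t(g)$ in which $\t|_P$ is the linear projection $(t_1,\dots,t_n)\mapsto(t_1,\dots,t_k)$. Thus $\t(P)$ meets a neighbourhood of $\t(g)$ in the image of a $C^\w$ slice $\phi^{-1}(\R^k\times\{0\})$; after shrinking $W$ so that $\mathcal{O}_x\cap W$ is a single plaque of the already smoothly embedded submanifold $\mathcal{O}_x$, this plaque coincides with that slice, so $\phi$ restricts to a $C^\w$ submanifold chart for $\mathcal{O}_x$ near $\t(g)$. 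Transition maps between two such charts are restrictions of $C^\w$ coordinate changes of $M$, hence $C^\w$, so these charts make $\mathcal{O}_x$ an embedded $C^\w$ submanifold of $M$ whose $C^\w$ structure refines its usual smooth structure. Letting $x$ range over $M$ covers every leaf, with the leaf dimension varying as in the statement.

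I do not expect a genuine obstacle beyond Theorem \ref{thm:main}: the argument is essentially the real analytic rank theorem applied to $\t|_{\s^{-1}(x)}$. The only points needing care are bookkeeping — checking that ``compatible'' forces $\s$ and $\t$ to remain submersions of unchanged rank (so the real analytic $\t|_P$ really has constant rank $\dim\mathcal{O}_x$), and verifying that the locally defined $C^\w$ slices along a fixed orbit glue to a single global embedded $C^\w$ submanifold, which works precisely because they all refine the common ambient smooth structure of the orbit, which is already embedded by properness.
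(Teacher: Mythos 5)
Your argument is correct and matches the paper's treatment: the paper states Corollary \ref{cor:charfol} as an immediate consequence of Theorem \ref{thm:main}, and your deduction — applying the real analytic constant rank theorem to $t|_{s^{-1}(x)}$ and using embeddedness of orbits of a proper groupoid to match the analytic slices with the smooth plaques — is exactly the standard way to spell out that implication.
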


Corollary \ref{cor:charfol} can be specialised for two distinguished families of proper Lie groupoids:

A (smooth) foliation $\cF$ on a manifold $M$ is called proper if its holonomy groupoid is proper (see \cite[Section 2]{CFM2} for a discussion on foliation groupoids).
Proper foliations include the so-called compact foliations with finite holonomy \cite{Ep,EMS}.

\begin{corollary}\label{cor:fol} A proper foliation $\cF$ on $M$ can be made $C^\w$, i.e., there exists
 a compatible real analytic structure on $M$ for which the leaves of $\cF$ a real analytic (embedded) submanifolds.
\end{corollary}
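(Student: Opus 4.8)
The plan is to realise $\cF$ as the characteristic foliation of a proper Lie groupoid and then invoke Corollary \ref{cor:charfol} essentially verbatim. The natural candidate is the holonomy groupoid $\Hol(\cF)\tto M$: its source and target fibres are the holonomy covers of the leaves, and its orbits are precisely the leaves of $\cF$. Consequently the characteristic foliation of $\Hol(\cF)$ --- the partition of $M$ into orbits --- coincides with $\cF$.

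First I would recall that, by definition, $\cF$ being a \emph{proper} foliation means exactly that $\Hol(\cF)$ is a proper Lie groupoid (this is the notion of proper foliation groupoid; see \cite[Section~2]{CFM2}). In particular $\Hol(\cF)$ satisfies the hypotheses of Theorem \ref{thm:main}, so Corollary \ref{cor:charfol} applies with $\cG=\Hol(\cF)$. It then produces a $C^\w$ structure on $M$, compatible with the given smooth one, for which the leaves of the characteristic foliation --- that is, the leaves of $\cF$ --- are embedded real analytic submanifolds. Since $\cF$ is regular, its leaves all have the same dimension, so the ``possibly varying dimension'' clause of Corollary \ref{cor:charfol} is vacuous in this setting and there is nothing further to check.

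I do not expect any genuine obstacle: the statement is a direct specialisation of Corollary \ref{cor:charfol}, the substantive work having already been absorbed into Theorem \ref{thm:main}. The only point deserving a line of justification is the identification of $\Hol(\cF)$ as a Lie groupoid to which those results apply --- namely that it is smooth, as a foliation groupoid, and proper, which is precisely the standing hypothesis on $\cF$ --- both of which are standard. One should only be careful about the scope of the conclusion: it asserts that the leaves become real analytic submanifolds for a suitable compatible analytic atlas on $M$, which is exactly the content of Corollary \ref{cor:charfol} and nothing more.
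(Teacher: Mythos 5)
Your proof is correct and follows exactly the route the paper intends: the paper defines a proper foliation precisely as one whose holonomy groupoid is a proper Lie groupoid, and Corollary \ref{cor:fol} is then the immediate specialisation of Corollary \ref{cor:charfol} to $\cG=\Hol(\cF)$, whose orbit (characteristic) foliation is $\cF$. Nothing in your write-up deviates from or adds a gap to this argument.
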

 
A Poisson manifold $(M,\pi)$ is called of proper type if there exists a proper symplectic Lie groupoid 
which integrates $(M,\pi)$ (see \cite{CFM1} for background on these class of Poisson manifolds).

\begin{corollary} If $(M,\pi)$ is a Poisson manifold of proper type (or, more generally, a Dirac manifold of proper type),
 then there exists a compatible $C^\w$-structure on $M$ for which the symplectic leaves of $(M,\pi)$ are real analytic submanifolds.
\end{corollary}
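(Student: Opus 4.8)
The plan is to deduce the statement directly from Corollary~\ref{cor:charfol}; the only point needing comment is the identification of the symplectic (resp. presymplectic) leaves with the leaves of a characteristic foliation. First I would unwind the hypothesis: by definition, if $(M,\pi)$ is of proper type there is a proper symplectic Lie groupoid $\cG\tto M$ integrating it, and more generally a Dirac manifold of proper type is integrated by a proper presymplectic groupoid $\cG\tto M$ (in the sense of Bursztyn--Crainic--Weinstein--Zhu; see \cite{CFM1} and the references therein). In either case $\cG$ is in particular a proper Lie groupoid over $M$, so Corollary~\ref{cor:charfol} applies and yields a compatible $C^\w$ structure on $M$ for which every leaf of the characteristic foliation of $\cG\tto M$ is an embedded real analytic submanifold.

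It then remains to observe that these leaves --- the orbits of $\cG$ --- are precisely the symplectic leaves of $(M,\pi)$ (resp. the presymplectic leaves of the Dirac structure). I would argue this as follows: the orbit foliation of any Lie groupoid integrates the singular distribution given by the image of the anchor of its Lie algebroid, and for a (pre)symplectic groupoid integrating $(M,\pi)$ this Lie algebroid is the one canonically attached to the Poisson (Dirac) structure, whose anchor has image the characteristic distribution. Since the $C^\w$ structure produced is compatible --- its underlying smooth structure is the original one --- the phrase ``real analytic submanifold'' refers to the given smooth leaves equipped with the restricted analytic structure, which is exactly the assertion.

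The only non-formal ingredient is thus the equality ``orbits of the integrating (pre)symplectic groupoid $=$ (pre)symplectic leaves of $(M,\pi)$'', which is classical in the Poisson case and part of the general integration theory of Dirac structures. I expect this to be the main --- indeed essentially the sole --- obstacle, and even there it amounts to invoking the right structure theory rather than to proving something new. Granting it, the corollary is immediate from Corollary~\ref{cor:charfol}; in particular no control over the symplectic or presymplectic forms carried by the leaves is required, since analyticity is asserted only for the leaves as submanifolds of $M$.
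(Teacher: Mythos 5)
Your proposal is correct and follows exactly the route the paper intends: the corollary is stated as an immediate specialization of Corollary~\ref{cor:charfol}, with the (standard) identification of the orbits of the integrating proper (pre)symplectic groupoid with the symplectic leaves of $(M,\pi)$ left implicit. Your explicit justification of that identification via the anchor of the associated Lie algebroid is precisely the classical fact being invoked, so there is nothing to add.
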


A feature common to the solutions of all variations of Hilbert's fifth problem is the application of averaging techniques
by means of (perhaps local) Haar measures
to gain access to the geometry at small scale. Our proof of Theorem \ref{thm:main} also hinges on an appropriate averaging 
scheme which can be thought of as a generalisation of Weyl's unitary trick to the groupoid setting.
We briefly describe how it lies at the centre of our proof: 

The fundamental result we need is a refinement of the semilocal normal forms around a leaf of a proper Lie  groupoid.
The refinement amounts to making the construction compatible with a $C^\w$ structure which
is defined on an open saturated subset whose boundary 
contains the leaf around which we work. The difficulty is to recognise how   
the existing $C^\w$-structure reflects on the geometry at small scale around the leaf;
infinitesimal data centred at the leaf does not see this real analytic structure.

The key step to produce semilocal normal forms for proper Lie groupoids   
is an analogue of Bochner's linearization
of a compact group action around a fixed point, and it was developed by Zung \cite[Sections 2.2 to 2.5]{Zu} (there are other approaches but they use infinitesimal
data at the fixed point \cite{CS,HF}): the compact group action is replaced
by an s-proper Lie groupoid with a fixed point. The objective
is to produce a morphism from the s-proper Lie groupoid to the isotropy Lie group
of its fixed point. The method involves averaging with a (normalised) Haar density 
of the Lie groupoid, which is a replacement of
the Haar density of a compact group. The fundamental difference is that for s-proper Lie groupoids
averaging once does not produce the sought after Lie groupoid morphism. In analogy with the centre of
mass construction \cite{GHS},
averaging has to be iterated and convergence of the corresponding sequence of maps has to be proved.

The averaging step in Zung's linearization Theorem is compatible with $C^\w$ structures defined
on arbitrary open saturated subsets. The core of this paper develops tools to show that the limit of 
the corresponding sequence of real analytic 
maps is also real analytic (see Corollary \ref{cor:zunganal}). This result was conjectured by Zung \cite[Remark after Theorem
2.3]{Zu}). 

There exist two characterisations of real analytic functions/maps: A quantitative one
given by appropriate uniform bounds on all derivatives on compact subsets (see e.g. \cite[Proposition 1.1.14]{Na}). A qualitative one presenting
the function as the real part of a holomorphic map equivariant with respect to complex conjugation. 
We found the quantitative characterisation difficult to apply in our case. The reason is that the quantitative analysis of 
Zung's averaging step would need a very fine quantitative analysis 
of some of the `explicit' expansions of the BCH
formula (Dynkin, Magnus or Fer expansions). 

Our qualitative approach amounts to `complexifying' the whole averaging step. We 
propose an averaging scheme for appropriate local holomorphic groupoids that 
is a Weyl unitary trick of sorts. The role of the compact real form is now played by  what we call
an s-proper core which supports a holomorphic Haar density. This is a pair designed to average  holomorphic functions. 
The existence of this semigroup-like subset and Haar density with strong compatibility properties with the holomorphic structure
is very delicate. Their construction in our setting requires basic Kempf-Ness theory of principal bundles. The outcome
is that the sequence of real analytic maps complexifies to a sequence of holomorphic maps with \emph{common} support 
which converges uniformly to a morphism of local holomorphic groupoids.

The paper is written assuming that the reader has some background of (proper)
Lie groupoids, and it is structured as follows: in Section \ref{sec:core} we propose an approach to perform
averaging on local (analytic) Lie groupoids based on s-proper cores endowed with Haar systems. Section
\ref{sec:models} contains the construction s-proper cores and holomorphic Haar systems for the
local complexifications of the s-proper groupoids which appear in the semilocal normal form theorems.  Section
\ref{sec:average} proves Theorem \ref{thm:average} which is the main technical result of this paper: It is an extension
of Zung's iteration method to correct almost morphism to the setting of local analytic groupoids endowed with s-proper cores and analytic Haar systems. 
Section \ref{sec:proof} contains the proof of Theorem \ref{thm:main}. 

 The author is grateful to P. Frejlich, E. Miranda and G. Trentinaglia for useful discussions on the subject.

% (possibly local) Haar measures 
% are used 
% \begin{itemize}
%  \item in the classical and local Hilbert's fifth problem to produce Gleason-Yamabe metrics;
%  \item in the 3-dimensional Hilbert-Smith conjecture to construct 
%  subsets of a 3-manifold endowed with an efective action of the $p$-adic integers which are invariant
% and have suitable homological properties;
% \item in the proof of real analyticity of proper group actions to construct 
% slices through a given point
% which are $C^\w$ in their intersection with any given open subset (not necessarily containing the point) 
% where the action is already $C^\w$.
% \end{itemize}

\section{Averaging on local Lie groupoids}\label{sec:core}
In this section we propose an approach to define (partial) averaging on appropriate local Lie groupoids.

\subsection{Local lie groupoids}
By a Lie groupoid  we mean a $C^\infty$ groupoid. 
By a local Lie groupoid we understand a pair of manifolds and surjective submersions $\cH\tto \cH_0$,
where the axioms of a Lie groupoid are relaxed in the following way:
\begin{itemize}
 \item The inversion is defined on an open neighbourhood of the units.
 \item The multiplication map $m$ is defined on an open neighbourhood of  
 $\cH^{(2)}=\{(q,p)\in \cH^2\,|\, s(q)=t(p)\}$ with contains
 the diagonal embedding of the units. 
 \item For any $p,q,r\in \cH$ such that $(r,q),(q,p)$ can be multiplied and $(m(r,q),p)$ can be multiplied,
 then $(r,m(q,p))$ can be multiplied and $m(m(r,q),p)=(r,m(q,p))$.
\end{itemize}

Given any open subset of $\cH_0$ or more generally an open subset of $\cH$ intersecting the units, there is
a standard way of restricting the local Lie groupoid to the open subset. This leads to a natural notion of
germ of local Lie groupoids around the units or around other bigger subsets. Certain representatives of a germ
of a local Lie groupoid may have additional advantages. For example, it is customary to assume in the definition of a local Lie groupoid
that the inversion is everywhere defined, but this is not really needed at the germ level. In our definition
of local Lie groupoid we have avoided symmetry with respect to the inversion. The reason is
that for our purposes of averaging we are going to multiply from the left.

Any Lie groupoid is an example of a local Lie groupoid. The local Lie groupoids which enter in the proof
of \ref{thm:main} fall in the following class: 

\begin{example} (Complexifications)
 Let $\cG\tto \cG_0$ be a real analytic groupoid. Its complexification is a well defined germ of holomorphic groupoid
 around $\cG\tto \cG_0$ endowed with an antiholomorphic involution. 
 
 A representative is given by first fixing a
 complexification of base points and arrows: $\cH_0,\cH$.
 Then source, target, embedding into units and inversion map are complexified and commute with the antiholomorphic
 involutions
 on units and arrows. Without loss of generality,
 the base points $\cH_0$ can be chosen to be an open neighbourhood of $\cG_0$ (invariant under the antiholomorphic involution)
 so that both source and target maps are (surjective) submersions. The complex manifold  $\cH^{(2)}$
 with the restriction of the product involution on $\cH^2$ is a complexification 
 of $\cG^{(2)}$. Therefore the open subset where the multiplication map is defined can be assumed
 to contain $\cG^{(2)}$ and the embedding of the units $\cH_0$ (and also to be 
 invariant by the product antiholomorphic 
 involution). 
 
 The structural maps are given by equations on (perhaps open subsets of) $\cH_0,\cH,\cH^{(2)}$ and $\cH^{(3)}$.
 These are complexifications of the respective real analytic manifolds for $\cG\tto \cG_0$. By analytic continuation if
 $\cH_0$ is chosen as above and it is a small enough neighbourhood of $\cG_0$, then $\cH\tto \cH_0$ is a local
 holomorphic groupoid.

\end{example}

\subsection{Cores for local Lie groupoids}
Roughly speaking, a core for a local Lie groupoid is a submanifold whose arrows can be used to multiply
from the left with no restriction, and which has some amount of right-homogeneity.

\begin{definition}\label{def:cores}
 Let $\cH\tto \cH_0$ be a local Lie groupoid.
 A core for  $\cH$ is a subset  $\cK \subset  \cH$ with the following properties:
 \begin{enumerate}
  \item(Lie type) $\cK$ is a submanifold of $\cH$ and the restriction of the source map $s_{\cK}:\cK\to \cH_0$ 
  is a surjective submersion (if $\cH\tto \cH_0$  is either real or complex analytic, then $\cK$ is required to be a real analytic submanifold).
%   and the restriction of the target map has constant
%   rank. In particular  $N:=t_{\cK}(\cK)$ is a submanifold $N\subset M$ over which we have the Lie semigroupoid $\cK_N\subset \cK$.
  \item($s_{\cK}$-fiber invertibility) The right action of $\cK$ on $\cK$ (where defined) identifies $s_{\cK}$-fibres.
  \item (no escape) The multiplication map is defined in the submanifold
  \[\cH^{(2)}_\cK:=\{(k,p)\in \cH^{(2)},\,k\in \cK\}.\]
   \end{enumerate}
   We say that $\cK$ is s-proper whenever $s_{\cK}$ is a proper map.

\end{definition}

\begin{example}\label{ex:weyl1} (Weyl's unitary trick I)
Let $G$ be a compact group acting on a vector space $E$ by orthogonal transformations.
 Let $\GG\ltimes \EE$ denote the complexified action, 
let $\EE^r$ be the Euclidean ball of radius $r$ and let $\cH^r\subset \GG\ltimes \EE$ be the open set of arrows
with source and target in $\EE^r$. The s-proper subgroupoid $\cK=G\ltimes \EE^r\subset \cH^r$ is an s-proper
core for $\cH^r$.
\end{example}

\subsection{Haar densities on s-proper cores}
Normalised Haar densities on s-proper Lie groupoids are the analogues of Haar densities on compact Lie groups (see \cite{CM} for 
a detailed treatment of measures on Lie groupoids).
We introduce their natural generalisation to s-proper cores for local analytic groupoids:
\begin{definition} Let $\cH\tto \cH_0$ be a local analytic Lie groupoid (real or complex)
and let $\cK\subset \cH$ be an  s-proper core for $\cH$.
 A (normalised) analytic Haar density on $\cK$ is a real analytic section $\mu$ of the bundle of $\R/\C$-valued  $s_\cK$-fibred densities of $\cK$
 with the following properties:
 \begin{enumerate}
 \item \[\int_{\cK_z}\mu^z=1,\qquad \forall z\in \cH_0.\]
 \item The right action of $\cK$ on $\cK$ (where defined) preserves $\mu$.
 \item Integration along the $s_\cK$-fibres preserves the regularity of functions
 \[\int_\cK\mu:C^\w(\cH)/\mathrm{Hol}(\cH)\to C^\w(\cH_0)/\mathrm{Hol}(\cH_0),\]
 and also when the base $\cH_0$ is changed.
\end{enumerate}
\end{definition}
The integration along the fiber alluded to in (3) involves first restriction from $\cH$ to $\cK$, and then
usual integration along the fiber against a (perhaps complex valued) density.
The change of base property will be applied to the submersions $t\circ \pi_1:\cH^{(i)}\to \cH_0$.

\begin{example}\label{ex:weyl2} (Weyl's unitary trick II)
Let $\cH^r\subset \GG\ltimes \EE$ be the local holomorphic groupoid described in Example \ref{ex:weyl1}. Let $\mu_G$
denote the Haar density on $G$ and let
$\mu_{\GG}:=\mu_G\otimes 1\in \Gamma(\cD\otimes \underline{\C})$, where $\cD$ denotes the density bundle of $G$.
We use the product structure of the core $\cK=G\times \EE^r$ to promote  $\mu_{\GG}$ into
a section of $\Gamma(\cD_s\otimes \underline{\C})$, where $\cD_s$ denotes the bundle of densities along the (source) 
fibres of $\cK$. The result is a holomorphic Haar density on $\cK$. 
\end{example}

\section{The s-proper core and holomorphic Haar measure for complexifications of linear models}\label{sec:models}
We aim at using s-proper cores supporting Haar densities for averaging purposes. Therefore, the larger they are, the
more information they will capture of the local Lie groupoid. In other words, we would
like to use the analogues of compact maximal subgroups of a Lie group. In the holomorphic setting
the compatibility of integration with holomorphic structures is rather delicate. We will
content ourselves
with describing s-proper cores and holomorphic Haar densities for the complexifications
of the building blocks of s-proper Lie groupoids.

Let $\cO$ be an orbit of an s-proper Lie groupoid. The linear model for the Lie groupoid around $\cO$   
is given by   
\[\cG\tto \cG_0:=(P\times P\times E)/G \tto P\times_G E,\]
where $\pi:P\to \cO$ is a compact principal $G$-bundle, $G\ltimes E$ is an orthogonal representation, and the
action of $G$ in the above product manifolds is the diagonal one \cite{CS}. 

Let us assume that $\pi:P\to \cO$ is a real analytic principal bundle. Then  $\cG\tto \cG_0$
is a real analytic groupoid with a well defined (germ of) complexification. Moreover, if we fix $\pi:\PP\to \OO$
as a representative of the complexification of $P$, 
then the holomorphic groupoid
\[\cH\tto \cH_0:=(\PP\times \PP\times \EE)/\GG \tto \PP\times_\GG \EE\]
 represents the germ. We shall define
 a family of local holomorphic groupoids $\cG\subset \cH^r\subset \cH$ together
with an s-proper core $\cK\subset \cH^r$ supporting
a holomorphic Haar density.

\subsection{Kempf-Ness theory for principal bundles}
Kempf-Ness theory is centred around the differential geometry (Kahler, Stein, symplectic)
of geometric invariant theory (see for example \cite{He}). We need a few basic results to 
construct our s-proper cores.

Let $G$ be a compact Lie group endowed with a bi-invariant metric, let $\g_\C=\g\oplus i\g$ 
be the complexification of its Lie algebra, and let $\GG$ be the complexified Lie group, in which $G$ embeds.
The Cartan decomposition
\[i\g\times G\to \GG,\qquad (iv,g)\mapsto \exp(iv)g\]
endows the tangent bundle $TG$ with a canonical complex structure. 
Let $TG^r$ be the open disk bundle of radius $r>0$. It is closed by inversion and the right action of $G$ by isometries.
Each right $G$-orbit
is regarded as a point in the symmetric space $\GG/G$. Under this correspondence $TG^r$ maps to 
the ball $B_r$ of radius $r$ centred at the identity/origin. 
For any right $G$-orbit we
can speak of its distance to the identity/origin $eG$
and of its distance to the boundary of the ball $B_r$, which we denote by $\varrho$.
If $\zeta\in \GG$, then 
its (right-)translation by the tube of radius $\varrho=\varrho_\zeta$, denoted by $\zeta \cdot TG^{\varrho}$,
is a $G$-saturated open set whose image in $B_r\subset \GG/G$ is a contractible open subset.

Let us now fix an analytic $G$-invariant Riemannian metric on $P$ whose restriction 
to each $G$-fiber is a bi-invariant metric on $G$.
Let $TP^r$ the open disk bundle of radius $r$ of the tangent bundle. The radius is chosen small enough  so that 
$TP^r$ carries the unique complex structure for which the distance to $P$ function solves the complex Monge-Amp\`ere equation
away from the zero section $P$ \cite[Page 562]{GSt}, \cite[Section 2]{LS}. There is a biholomorphism from $TP^r$ to an open subset of $\PP$, so we may write $TP^r\subset \PP$.
Let $\cK\cN\subset TP^r$ be the Kempf-Ness set (submanifold). We refer the reader to \cite[Theorem 1]{Ag} and  
\cite[Section 5.4]{He} for details
and properties of this submanifold which solves a Kahlerian extremal problem that can be phrased using
both moment map theory and fiberwise plurisubharmonic functions.

\subsection{The local holomorphic groupoids and s-proper cores}\label{ssec:representative}
We will now construct the local holomorphic groupoids $\cH^r\tto  \cH_0^r$. From now on $r>0$ is small
enough so that $TP^r$ carries the complex structure discussed above:

\begin{enumerate}[(i)]
 \item  On the submanifold 
\[\cK\cN\times \EE^r\subset TP^r\times \EE^r\subset  \PP\times \EE^r\]
we consider the diagonal action map: $\GG \times(\cK\cN\times \EE^r)\to TP^r\times \EE^r$.
 Because the submanifold
$G\times \cK\cN\times \EE^r$ has image inside of $TP^r\times \EE^r$, by continuity of the action the inverse image of $TP^r\times \EE^r$
 is an open neighbourhood of $G\times\cK\cN\times \EE^r$. Because $TP^r$, $\EE^r$ and $\GG$ are stable by the right $G$-action,
 upon taking the quotient of the neighbourhood by the action of $G\times G$ on $\GG\times TP^r$  we get a neighbourhood
 of the identity section of the trivial homogeneous bundle
 $\GG/G\times \GG/G\times \OO\times \EE^r$.
 This implies that we may define two `radial' functions
 \[\varrho_s:\OO\times \EE^r\to (0,\infty),\quad \varrho:B_{\varrho_s}\times \OO\times \EE^r\to (0,\infty)\]
      such that:
 
 \[\begin{aligned}\cI_s &:=TG^{\varrho_s}\cdot(\cK\cN\ \times \EE^r)\subset TP^r\times \EE^r,\\
 \cI &:=TG^{\varrho}\cdot (\cK\cN\times \cI_s)\subset TP^r\times \cI_s,\\
\cI_t &:=TG^{\varrho|_e}\cdot(\cK\cN\ \times \EE^r)\subset \cI_s,
\end{aligned}
\]
where $\varrho|_e$ is the restriction of $\varrho$ to the identity section of $B_{\varrho_s}\times \OO\times \EE^r$. By construction
the open subsets $\cI,\cI_s,\cI_t$ are $G$-invariant.
\item  On $\cI$ there is a holomorphic foliation whose leaves are the $TG^{\varrho}$-tubes. We denote its leaf space
by $\cH^r$. On  $\PP\times \PP\times \EE$ there is a holomorphic foliation given by the diagonal $\GG$-action. The
 inclusion $\cI\rightarrow \PP\times \PP\times \EE$ is a map of holomorphic foliations,
 takes leaves of $\cI$ to open and connected subsets of leaves of $\PP\times \PP\times \EE$,
and  is injective on leaf spaces. By construction, $\cK\cN\times  \cI_s\subset \cI$ and  $\cK\cN\times \PP\times \EE\subset \PP\times \PP\times \EE$
are full slices to the holomorphic foliations in which the residual foliation is given by the $G$-action. Therefore the (open) injection on leaf spaces is
\[\cH^r\equiv\cK\cN\times_G \cI_s\subset (\cK\cN\times\PP\times \EE)/G\equiv \cH.\]
% If we denote $(\cK\cN\times \cK\cN\times \EE^r)/G$ by $\cH^r_{\cK\cN}$, then it follows that $\cH^r_{\cK\cN}\subset \cH^r $.
\item On $\cI_s$ and $\cI_t$ there are holomorphic foliations whose leaves are the $TG^{\varrho_s}$ and $TG^{\varrho|_e}$-tubes, respectively.
Let $\cH^r_0$ denote the leaf space of the first one. On $\PP\times \EE$ there is a holomorphic foliation given by the diagonal $\GG$-action.
The inclusion $\cI_s\rightarrow \PP\times \EE$ is a map of holomorphic foliations.
Because leaves go to open connected subsets of leaves, there is an induced map on leaf spaces. By construction 
$\cK\cN\times \EE^r$ and $\cK\cN\times \EE$ are full slices with residual foliation given by the $G$ action. Therefore the (open)
injection on leaf spaces
is 
\[\cH^r_0\equiv\cK\cN\times_G \EE^r\subset \cK\cN\times_G \EE\equiv \cH_0.\]
The second projection map $\cI\to \cI_s$ is a surjective submersion of foliations. It 
 descends
 to a surjective submersion of leaf spaces $s:\cH^r\to \cH_0^r$.
The leaf space of $\cI_t$ can also be canonically  identified with $\cK\cN\times_G \EE^r\equiv \cH^r_0$. 
Because $\varrho(\kappa,\zeta,v)\leq \varrho (\kappa,\kappa',v)$, for all $\zeta$ in the $TG^{\varrho_s}$-tube
 of $(\kappa',v)$,
the restriction of first and third projections from $\PP\times \PP\times \EE^r$  to $\cI$ has image 
 $\cI_t$. Thus,  there is an induced surjective submersion
 $t:\cH^r\to \cH^r_0$.

\end{enumerate}

Because $\cH^r\subset \cH$, it follows that if we restrict the inversion and multiplication map to the subsets
of $\cH^r$ and  ${\cH^{r}}^{(2)}$ where the maps have range in $\cH^r$, then  we get a local holomorphic groupoid
structure on $\cH^r\tto \cH_0^r$. The local 
local holomorphic groupoids $\cH^r, r>0$, define a system of neighbourhoods of $\cG_\cO=P\times_G P$ with compact closure.   

% The submanifold $\cH^r_{\cK\cN}\subset \cH^r$ is a  subgroupoid of $\cH^r$. The source
% fiber at $[\kappa,v]\in \cH^r_0\equiv \cK\cN\times_G \EE^r$ can be identified with $\cK\cN$. 
% 
%  Let ${\cH^r}^{(2)}_{\cK\cN}$ be the subset of composable arrows where the first arrow of a pair  is in $\cH^r_{\cK\cN}$.
%  The multiplication map satisfies $m({\cH^r}^{(2)}_{\cK\cN})\subset \cH^r$:
%  if $[\zeta,\zeta',v]\in \cH^r$, then we can take a representative of the form $(\kappa,\xi,w)\in \cK\cN\times \cI_s$.
%  The arrow in $(\cK\cN\times \cK\cN\times \EE^r)/G$ has a representative of the form $(\kappa'',\kappa,\kappa v)$,
%  and the composition is represented by $(\kappa'',\zeta',v)\in \cK\cN\times \cI_s$.
%  
 
We define 
\[\cK:=(P\times \cK\cN\times \EE^r)/G.\]
% =t^{-1}(P\times_G \EE^r)\cap \cH^r_{\cK\cN}\subset \cH^r_{\cK\cN}.\]

\begin{proposition}
 $\cK$ is an s-proper core for $\cH^r\tto \cH^r_0$.
\end{proposition}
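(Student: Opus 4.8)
The plan is to verify the three defining properties of a core (Definition~\ref{def:cores}) together with s-properness, in each case by unwinding the explicit description of $\cH^{r}\equiv\cK\cN\times_{G}\cI_{s}$ and of the sets $\cI,\cI_{s},\cI_{t}$ from Section~\ref{ssec:representative}. First I would pin down $\cK$ inside $\cH^{r}$: since the Kempf--Ness set contains the zero section $P\subset\cK\cN$ and since $\cK\cN\times\EE^{r}\subset\cI_{s}$, the rule $(p,\kappa,v)\mapsto\bigl(p,(\kappa,v)\bigr)$ exhibits $\cK=(P\times\cK\cN\times\EE^{r})/G$ as a subset of $\cH^{r}$ in which $[(p,\kappa,v)]$ is the arrow with source $[(\kappa,v)]$ and target $[(p,v)]$ in $\cH^{r}_{0}\equiv\cK\cN\times_{G}\EE^{r}$. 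Two standing facts would be used throughout: the $G$-invariance of $\cK\cN$ and of the open sets $\cI_{s},\cI_{t}$ (built into the construction), and the freeness of the diagonal $G$-action (because $P\to\cO$ is a principal bundle) together with its properness (because $G$ is compact), which in particular makes $\cK\cN\times\EE^{r}\to\cH^{r}_{0}$ a principal $G$-bundle. Granting these, the Lie-type axiom is routine: $P\times\cK\cN\times\EE^{r}$ is real analytic (product of the real analytic principal bundle $P$, the real analytic submanifold $\cK\cN\subset TP^{r}$, and the open subset $\EE^{r}$) with a free proper real analytic diagonal $G$-action, so $\cK$ is a real analytic manifold embedded real-analytically in $\cH^{r}$, and $s_{\cK}$ is induced by the submersion $\pr_{23}\colon P\times\cK\cN\times\EE^{r}\to\cK\cN\times\EE^{r}$, hence is a surjective submersion after passing to the free quotients. s-properness is also immediate: $\cK=P\times_{G}(\cK\cN\times\EE^{r})$ is the fibre bundle with fibre $P$ associated to the principal $G$-bundle $\cK\cN\times\EE^{r}\to\cH^{r}_{0}$, and $s_{\cK}$ is its projection, hence proper because $P$ is compact.

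The main point, and the step where care is needed, is \emph{no escape}. Given $k=[(p,\kappa,v)]\in\cK$ and a composable $q\in\cH^{r}$, I would represent $q=[(\kappa',\mathfrak p,v')]$ with $\kappa'\in\cK\cN$ and $(\mathfrak p,v')\in\cI_{s}$; then $s(k)=t(q)$ says $[(\kappa,v)]=[(\kappa',v')]$ in $\cK\cN\times_{G}\EE^{r}$, and freeness of the $G$-action pins down a unique $g\in G$ with $\kappa'=\kappa g$, $v'=g^{-1}v$. Acting by $g^{-1}$ re-presents $q=[(\kappa,\mathfrak p g^{-1},v)]$, whose first slot now equals the source slot of $k$, so the product in $\cH=(\PP\times\PP\times\EE)/\GG$ is $m(k,q)=[(p,\mathfrak p g^{-1},v)]$. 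Now $p\in P\subset\cK\cN$ while $(\mathfrak p g^{-1},v)=g^{-1}\cdot(\mathfrak p,v')\in\cI_{s}$ by $G$-invariance of $\cI_{s}$; hence $m(k,q)\in\cK\cN\times_{G}\cI_{s}=\cH^{r}$, so the local multiplication of $\cH^{r}$ is defined on all of $(\cH^{r})^{(2)}_{\cK}$. The only delicate part here is the bookkeeping of representatives under the various leaf-space identifications; the geometric content is just that $\cI_{s}$ was constructed $G$-invariant, and this is precisely what keeps the product inside $\cH^{r}$.

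For $s_{\cK}$-fibre invertibility I would first note $t(\cK)=P\times_{G}\EE^{r}\subset\cH^{r}_{0}$, so that right multiplication by an element $k'\in\cK$ is defined, as a map of $s_{\cK}$-fibres, exactly on $s_{\cK}^{-1}(t(k'))$, and this product is available by the no-escape step. Writing $k'=[(p',\kappa',v')]$ and using the representative $(p',v')$ of $t(k')$, one has $s_{\cK}^{-1}(t(k'))=\{[(p,p',v')]:p\in P\}$ and $[(p,p',v')]\cdot k'=[(p,\kappa',v')]\in\cK$, so right multiplication by $k'$ is, under the canonical identifications $s_{\cK}^{-1}(t(k'))\cong P\cong s_{\cK}^{-1}(s(k'))$, simply the identity map of the fibre $P$; in particular it is a bijection of $s_{\cK}$-fibres, which is property~(2) wherever the $\cK$-action is defined. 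Assembling these verifications shows that $\cK$ is an s-proper core for $\cH^{r}\tto\cH^{r}_{0}$.
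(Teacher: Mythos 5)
Your proof is correct and follows essentially the same route as the paper's: a direct verification of the three core axioms and of s-properness by computing with representatives, using $P\subset \cK\cN$, the identification $\cH^r\equiv \cK\cN\times_G\cI_s$, and the $G$-invariance of $\cI_s$. The only ingredient you take for granted that the paper justifies explicitly is that $\cK\cN$ (hence $\cK$) is a \emph{real analytic} submanifold, which the paper gets from $\cK\cN$ being the zero set of the real analytic normalised momentum map for the $G$-action on $TP^r$.
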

\begin{proof}
By construction $\cK$ is a submanifold intersecting every s-fiber transversely. It is real analytic
because $\cK\cN$ is the zero set of the normalised momentum map for the $G$-action on $TP^r$ \cite{He,Ag}, which is a real analytic map.
% Left multiplication by the Lie subgroupoid $(\cK\cN\times \cK\cN\times \EE^r)/G$ leaves $\cH^r$ invariant:
%  if $[\zeta,\zeta',v]\in \cH^r$, then we can take a representative of the form $(\kappa,\xi,w)\in \cK\cN\times \cI_s$.
%  The arrow in $(\cK\cN\times \cK\cN\times \EE^r)/G$ has a representative of the form $(\kappa'',\kappa,\kappa v)$,
%  and the composition is represented by $(\kappa'',\zeta',v)\in \cK\cN\times \cI_s$. Since $\cK= t^{-1}(P\times_G \EE^r)\cap 
%  (\cK\cN\times \cK\cN\times \EE^r)/G$,
%  we have $m({\cH^r}^{(2)}_\cK)\subset \cH^r$.
% 
% $\cK$ ${\cH^r}^{(2)}_{\cK\cN}$ be the subset of composable arrows where the first arrow of a pair  is in $\cH^r_{\cK\cN}$.
% 
% 
% The submanifold $P\times_G \EE^r$ is a full slice to the orbit foliation of  $\cH^r_{\cK\cN}\tto \cH^r_0$. Therefore $\cK$ is transverse to the source
% map of $\cH^r\tto \cH^r_0$.
% By construction the restriction of the target map to $\cH^r_{\cK\cN}$ has constant rank,  with image $N:=P\times_G \EE^r$.
% Note that $\cK_{N}=(P\times P\times \EE^r)/G$ is a subgroupoid. 

Let $[p,\kappa,v]\in \cK$. The $s_\cK$-fiber at $t([p,\kappa,v])=[p,v]$ (the classes with respect to  the diagonal $G$-action)
is $[P,p,v]$. The $s_\cK$-fiber at $s_\cK([p,\kappa,v])=[\kappa,v]$ is $[P,\kappa,v]$. Left multiplication of $[P,p,v]$ by $[p,\kappa,v]$ yields
the $s_\cK$-fiber at $[k,v]$.

An arrow $[\zeta,\zeta',v]\in \cH^r$ can be represented by $(\kappa,\xi,w)\in \cK\cN\times \cI_s$. The composition
$[p,\kappa, w]\cdot [\zeta,\zeta',v]$ is represented by  $(p,\xi,w)\in \cK\cN\times \cI_s$, so $m({\cH^r}^{(2)}_\cK)\subset \cH^r$.
% 
% Because $m({\cH^r}^{(2)}_{\cK\cN})\subset \cH^r$, we also have $m({\cH^r}^{(2)}_{\cK})\subset \cH^r$, where ${\cH^r}^{(2)}_{\cK}$
% is the subset of $\cI^{(2)}$ where  the first arrow of the pair is in $\cK$. 
% 

The source fiber of $\cK$ at $[\kappa,v]$ can be identified with $P$, so $\cK$ is s-proper.
\end{proof}

\subsection{Holomorphic Haar densities on the s-proper core.}
Let $s_\cK:\cK\to \cH_0$ be an s-proper core for a local holomorphic groupoid $\cH\tto \cH_0$.
A holomorphic Haar density on $\cK$ is in particular a section of the bundle of complexified fibred measures. We
start by discussing necessary conditions on the core to obtain such sections as restrictions of s-fibred holomorphic densities 
on the local holomorphic groupoid $\cH$.

More generally, let $q: Q\to B$ be a proper real analytic submersion. There is an integration map from fibred densities on $Q$ to functions on $B$:
\[\int_Q:\Gamma^\omega(\cD_q)\to C^\omega(B).\]

Let $q:X\to Z$ be a holomorphic submersion, not necessarily proper. Let  $Q\subset X$ be a submanifold
transverse to $q$ and intersecting every fiber in a compact submanifold, so that
$q:Q\to Z$ is a proper submersion.
We say that $Q$ defines a totally real analytic (proper) subfibration if for every point in  $Q$ there is
an analytic diffeomorphism of submersions from a neighbourhood of the point in $(X,Q)\to Z$ to $(\C^n\times \R^m,\R^n\times\R^m)\to \R^m$, which is holomorphic on fibres.

\begin{lemma}\label{lem:totallyreal} Let $q:X\to Z$ be a holomorphic submersion  and let $\cL\to X$ be a square root of the canonical bundle
of $X$. If $Q\to  Z$ is a totally real analytic (proper) subfibration of $q:X\to Z$ then:
\begin{enumerate}
 \item There is a canonical isomorphism of real analytic complex line bundles:
 \[\cL|_Q\to \cD_{q}\otimes \underline{\C}\otimes W,\]
 where $W$ is the flat real line bundle defined by a section of the $\Z_2$-valued degree two fiberwise cohomology bundle of $q:Q\to Z$.
 \item If $W$ is trivial, then $Q$ defines an integration map:
 \[\int_Q:\Gamma^\omega(\cL)\to C^\omega(Z,\C).\]
\end{enumerate}
\end{lemma}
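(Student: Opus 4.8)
The plan is to work locally and then patch. By the definition of a totally real analytic subfibration, around any point of $Q$ there is an analytic chart of submersions
\[(X,Q)\supset (U,U\cap Q)\xrightarrow{\ \sim\ }(\C^n\times \R^m,\R^n\times \R^m)\to \R^m\]
which is holomorphic along the fibres. In such a chart $X$ carries the holomorphic coordinates $z_1,\dots,z_n$ along the fibre together with the base coordinates $t_1,\dots,t_m$, the fibre of $q$ through a point of $Q$ is $\{t=\mathrm{const}\}$, and $Q$ itself is cut out by $\im z_j=0$. First I would observe that the canonical bundle of $X$ (relative to nothing — the full canonical bundle) restricts, along these fibre-holomorphic charts, with local frame $\d z_1\wedge\cdots\wedge \d z_n\wedge \d t_1\wedge\cdots\wedge \d t_m$; upon restriction to $Q$ and recalling that on $Q$ we have $z_j=x_j\in\R$, this becomes $\d x_1\wedge\cdots\wedge \d x_n\wedge \d t_1\wedge\cdots\wedge\d t_m$, whose modulus is exactly a fibrewise density for $q|_Q:Q\to Z$ (the base directions $\d t$ being common). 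Taking the chosen square root $\cL$ and passing to moduli, a local frame of $\cL|_Q$ maps to a local frame of $\cD_q\otimes\underline\C$. The point of part (1) is then purely bookkeeping of the transition functions.

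For the transition functions: on an overlap of two such charts the change of fibre coordinates is a biholomorphism $w=\varphi(z,t)$ of the fibres (holomorphic in $z$), so $\d w_1\wedge\cdots\wedge \d w_n = \det\!\big(\partial\varphi/\partial z\big)\,\d z_1\wedge\cdots\wedge \d z_n$ modulo $\d t$'s, and hence the canonical bundle of $X$ has $\cC^\omega$ (indeed holomorphic along fibres) transition cocycle $\det(\partial\varphi/\partial z)$. Its square root $\cL$ has transition cocycle a choice of square root of $\det(\partial\varphi/\partial z)$, defined up to sign. Restricting to $Q$, the determinant $\det(\partial\varphi/\partial z)|_Q$ is the Jacobian of a real analytic fibre diffeomorphism of $Q$, so its \emph{modulus} is the transition cocycle of $\cD_q$. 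Thus $\cL|_Q$ and $\cD_q\otimes\underline\C$ have transition cocycles which agree up to the $\Z_2$-valued cocycle
\[
c = \frac{\sqrt{\det(\partial\varphi/\partial z)}\big|_Q}{\big|\det(\partial\varphi/\partial z)|_Q\big|}\in\{\pm1\},
\]
which is locally constant on $Q$ and constant along the $t$-directions, hence defines a class in the degree-two $\Z_2$-valued \emph{fibrewise} cohomology of $q:Q\to Z$; call the associated flat real line bundle $W$. Then $\cL|_Q\otimes W^{-1}\cong \cD_q\otimes\underline\C$ canonically (the cocycles now literally agree up to modulus, and I would normalise by taking the positive real representative of $\cD_q$), and since $W$ is flat real of order two, $W^{-1}\cong W$, giving the stated isomorphism $\cL|_Q\cong \cD_q\otimes\underline\C\otimes W$. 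Real analyticity of all bundles and maps is inherited from the charts.

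Part (2) is then immediate: if $W$ is trivial, part (1) gives $\cL|_Q\cong \cD_q\otimes\underline\C$, so restriction composed with this isomorphism sends a real analytic section of $\cL$ over $X$ to a real analytic section of $\cD_q\otimes\underline\C$ over $Q$, i.e.\ a complex-valued real analytic fibrewise density for the \emph{proper} real analytic submersion $q|_Q:Q\to Z$; integrating over the compact fibres using the integration map $\int_Q:\Gamma^\omega(\cD_q)\to C^\omega(Z)$ recalled just above (applied to real and imaginary parts) yields the desired $\int_Q:\Gamma^\omega(\cL)\to C^\omega(Z,\C)$. The main obstacle is the correct identification and well-definedness of the $\Z_2$-class: one must check that the sign ambiguity in $\sqrt{\det(\partial\varphi/\partial z)}$ assembles into a genuine \v{C}ech cocycle with values in the fibrewise (not total) cohomology, i.e.\ that it is locally constant \emph{and} annihilated by the base directions — this uses that $\varphi$ is holomorphic along fibres, so $\det(\partial\varphi/\partial z)$ is a nonvanishing holomorphic function on each fibre and its argument varies continuously, forcing the chosen branch of the square root to be locally constant once a branch is fixed near $Q$ — together with the observation that, because $Q\cap(\text{fibre})$ is where $z$ is real, the relevant Jacobian on $Q$ is real, so the obstruction to matching moduli is precisely a sign. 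Everything else is a routine patching argument with partitions of unity and the already-quoted properness of $q|_Q$.
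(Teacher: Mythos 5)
Your argument is essentially the paper's own proof: work in the adapted charts $(\C^n\times\R^m,\R^n\times\R^m)$, compare the transition cocycle of $\cL$ along $Q$ (a root of the fibrewise complex Jacobian) with the cocycle of $\cD_q\otimes\underline{\C}$, observe that over the real points they differ only by a sign, read that sign as a fibrewise $\Z_2$-valued cocycle defining the flat bundle $W$, and, when $W$ is trivial, integrate the restricted section as a complex-valued fibred density against the proper fibres. The only caveat --- one the paper itself shares, since the $\cL$ actually used later is the complexification of the fibrewise density bundle, with cocycle $\pm J_{\C}$, rather than a literal square root of the canonical bundle --- is that your quotient $\sqrt{\det(\partial\varphi/\partial z)}\big/\big|\det(\partial\varphi/\partial z)\big|$ is a genuine element of $\{\pm 1\}$ only under that reading; with the literal normalisation the moduli of the two cocycles differ by a power of the Jacobian, so the displayed $c$ should be replaced by the ratio of the $\cL$-cocycle to the density cocycle in the degree-matched convention.
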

\begin{proof}
Let us fix a system of charts of $X\to Z$ around $Q$ of the form  $(\C^n\times \R^m,\R^n\times\R^m)$. The system defines the corresponding cocycles
$d_{\alpha\beta}\otimes 1,l_{\alpha\beta}$ for $\cD_{q}\otimes \underline{\C}$ and $\cL$ around
$Q$, respectively. The cocycle $l_\alpha\beta$ is a square root of the (fiberwise relative to $\R^m$) complex Jacobian of the change of complex coordinates.
Over the points of $\R^n$ this
Jacobian is the Jacobian for the change of coordinates in $\R^n$. The cocycle $d_{\alpha\beta}\otimes 1$ amounts to composing
the real Jacobian with the branch of the complex square root preserving the positive half line. Thus the cocycles over $\R^n$ only differ
in a sign. The sign defines a cocycle in 
the flat bundle of fiberwise degree 2 cohomology classes with coefficients in $\Z_2$, and thus the flat real line bundle $W$.

If $W$ is trivial, then $\int_Q$ is defined by restricting a real analytic section of $\cL$ to $Q$, using the above isomorphism
to regard it as a complex valued fibred density, and  integrating the complex valued fibred density against the fibres of $q$.
 \end{proof}

Lemma \ref{lem:totallyreal} can be applied to the s-proper core $\cK\subset \cH^r$:
Let $\cD_s\to \cG=(P\times P\times E)/G$ be the bundle of s-fibred densities. Its complexification  $\cL\to \cH^r$
is a square root of the canonical line bundle along s-fibres.

\begin{lemma}\label{lem:analytic-integration} The s-proper core $\cK$ defines an integration map:
  \begin{equation}\label{eq:intanalytic}
  \int_\cK:\Gamma^\omega(\cL)\to C^\omega({\cH^r_0},\C).
   \end{equation}
\end{lemma}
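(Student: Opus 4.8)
We deduce the statement from Lemma~\ref{lem:totallyreal}, applied to the holomorphic submersion $s\colon \cH^r\to\cH^r_0$, the subset $\cK\subset\cH^r$, and the square root $\cL$ of the canonical bundle along the $s$-fibres. There are two things to establish: that $\cK$ is a totally real analytic (proper) subfibration of $s$, and that the flat real line bundle $W$ produced by part~(1) of that lemma is trivial.

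\emph{$\cK$ is a totally real analytic proper subfibration.} Properness of $s_{\cK}$, transversality of $\cK$ to the $s$-fibres and real analyticity of $\cK$ are exactly what the preceding proposition records: recall that $\cK\cN$ is the zero set of the normalised moment map, which is real analytic, and that every $s_{\cK}$-fibre is a copy of the compact manifold $P$. What remains is to produce, near each point of $\cK$, an analytic chart of $(\cH^r,\cK)\to\cH^r_0$ of the form $(\C^n\times\R^m,\R^n\times\R^m)\to\R^m$, holomorphic along the fibres. Here I would use the way $\cH^r$ is assembled in \S\ref{ssec:representative}: the $s$-fibres of $\cH\equiv(\PP\times\PP\times\EE)/\GG$ are copies of $\PP$, so the $s$-fibre of $\cH^r$ over a point of $\cH^r_0$ is an open neighbourhood of $P$ in the complexification $\PP$, with local model the adapted complex structure on $TP^r\subset\PP$; under this identification the corresponding $s_{\cK}$-fibre of $\cK$ is carried to the zero section $P\subset TP^r$, which is totally real. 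Thus, fibrewise over $\cH^r_0$, $\cK$ is a real analytic totally real submanifold of half the dimension, and a real analytic $\cK$-adapted frame of the tangent bundle along the $s$-fibres, exponentiated fibrewise, upgrades this to charts depending real analytically on the base point. Part~(1) of Lemma~\ref{lem:totallyreal} then gives $\cL|_{\cK}\cong\cD_{s_{\cK}}\otimes\underline{\C}\otimes W$.

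\emph{$W$ is trivial.} I would exploit that $\cL$ is, by construction, the complexification of the genuine real line bundle $\cD_s\to\cG$. Over the real analytic submanifold $\cG_0\subset\cH^r_0$ (arising from $P\subset\cK\cN$, the zero section lying in the Kempf--Ness set because the moment map is linear on the fibres of $TP^r$, and $E^r\subset\EE^r$) the core restricts to the real groupoid, $\cK|_{\cG_0}=\cG$, and there $\cL|_{\cG}=\cD_s\otimes\underline{\C}=\cD_{s_{\cK}}|_{\cG}\otimes\underline{\C}$ tautologically, so $W$ is canonically trivial over $\cG$. Since $\cG\hookrightarrow\cK$ is a homotopy equivalence ($\cK\cN$ deformation retracts $G$-equivariantly onto its zero section $P$ and $\EE^r$ onto $E^r$, so both $\cK$ and $\cG$ retract onto $\cG_\cO=P\times_G P$) and $W$ is flat, triviality over $\cG$ forces triviality over $\cK$. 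Part~(2) of Lemma~\ref{lem:totallyreal} now yields the map~\eqref{eq:intanalytic}.

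The step I expect to be the genuine obstacle is the total-reality verification: making rigorous the assertion that each $s$-fibre of $\cH^r$ is biholomorphic to an adapted-complex-structure neighbourhood of $P$ in $\PP$, carrying the corresponding $s_{\cK}$-fibre of $\cK$ onto the totally real zero section, and doing so uniformly---real analytically---in the base point of $\cH^r_0$. This is precisely where the construction of \S\ref{ssec:representative} and the Kempf--Ness description of $\cK\cN$ as a moment map level set must be used with care; once the fibrewise totally real picture is secured, the passage to charts and the triviality of $W$ are routine.
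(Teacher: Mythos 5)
Your overall strategy coincides with the paper's: reduce to Lemma \ref{lem:totallyreal} by checking that $s_\cK:\cK\to\cH^r_0$ is a totally real analytic proper subfibration of $s:\cH^r\to\cH^r_0$ and that $W$ is trivial. But the step you yourself flag as the obstacle --- producing, real analytically in the base point, charts of the pair $(\cH^r,\cK)\to\cH^r_0$ of the form $(\C^n\times\R^m,\R^n\times\R^m)\to\R^m$ holomorphic on fibres --- is exactly the content of the paper's proof, and your proposed mechanism does not supply it. Exponentiating a ``real analytic $\cK$-adapted frame along the $s$-fibres'' is a metric/connection-type construction: the resulting fibrewise maps have no reason to be holomorphic, so it cannot upgrade the pointwise identification (fibre, $s_\cK$-fibre) $\cong$ (neighbourhood of $P$ in $\PP$, $P$) to fibrewise-holomorphic charts varying analytically over $\cH^r_0$. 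What is needed is a holomorphic device to compare nearby fibres, and the paper builds one from the groupoid structure itself: it constructs a local real analytic section $S$ of $s_\cK$ which is simultaneously adapted to the target map (assembled from a section $S_1$ of $\cK\cN\to\OO$, the metric retraction $\OO\to\cO$, and a section $S_2$ of $P\to\cO$), and then uses right translation by $S$ --- which is holomorphic on $s$-fibres, being groupoid multiplication --- together with pullback along $B\times\EE^r\to B$, to identify a neighbourhood of $s_\cK^{-1}(s_\cK(S))$ in $s^{-1}(s_\cK(S))$ with the pullback of the standard fibrewise complexification $\pi^{-1}(B)\subset\PP|_B$ of the $G$-fibres of $P$. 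A workable alternative closer to your sketch would be to choose a local real analytic lift of $\cH^r_0$ to $\cK\cN\times\EE^r$ and use the holomorphy of the quotient map $\PP\times\PP\times\EE\to\cH$, but then you must also verify (via the Kempf--Ness uniqueness property, $\GG\kappa\cap\cK\cN=G\kappa$) that in each such trivialized fibre the core is cut out exactly by $P$; none of this is in the proposal, so as written the key step is missing rather than merely deferred.

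On the triviality of $W$, your argument (canonical triviality over the real locus $\cG\subset\cK$, then propagation by flatness using a $G$-equivariant retraction of $\cK\cN$ onto $P$ and of $\EE^r$ onto $E^r$) is plausible but rests on the unverified retraction of $\cK\cN$ onto $P$; the paper disposes of this more directly by observing that $\cL$ is, by construction, the complexification of the genuine real density bundle $\cD_s$, so the transition cocycle along $\cK$ carries no sign ambiguity.
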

\begin{proof}
The restriction $\cL|_P$  is the complexification of $\cD_s$. Thus, if we show that $s_\cK:\cK\to \cH^r_0$ is a totally
real analytic subfibration of $s:\cH^r\to\cH^r_0$, then by Lemma \ref{lem:totallyreal} the result follows.

The restriction of the target map to $\cK$ is a submersion onto $(P\times \EE^r)/G$. At $[p,\kappa,e]\in \cK$ we 
can take a real analytic submanifold
$[p,\kappa,e]\in S\subset \cK$ such that $S$ is a section of $s_\cK$ and
the restriction of $t$ to $S$ is a  submersion.
We select (the image of)
a section $\kappa\in S_1$ of $\pi:\cK\cN\to \OO$ over $A\subset \OO$. We map the projection of $A$ 
by the metric retraction  $\OO\cong T\cO^r \to \cO$ (a real analytic map)
to  a neighbourhood $B\subset \cO$ of $\pi(p)$. We choose a section $p\in S_2$ of $\pi:P\to \cO$ over $B$. This defines
a submersion $\varphi:S_1\to S_2$ and $S=\varphi(S_1)\times S_1\times \EE^r\subset \cK$ satisfies the requirements.

We have two real analytic fibrations with holomorphic fibres: the first is
a neighbourhood of $s_{\cK}^{-1}(s_{\cK}(S))$ in $s^{-1}(s_{\cK}(S))$. The second is  a neighbourhood of $\pi^{-1}(B)\subset P$ in the collection
of fibres of $\PP$ over $B$. Pulling back by the first projection $B\times \EE^r\to B$  and then by the  right action of $S$ 
defines an isomorphism compatible with the holomorphic structures of the fibres from the second fibration to the
first  one.
Because the second fibration is a complexification of the fibres of a real analytic fibration, the same holds for the first one.
\end{proof}

To prove that the integration map in (\ref{eq:intanalytic}) is compatible with holomorphic structures
we need to use some specific properties of our source fibrations and line bundles. These properties
will reduce our argument to
standard averaging of holomorphic functions by a compact group acting by holomorphic transformations.

Let $q:Q\to B$ be a proper submersion. There is a push forward map from densities on $Q$ to densities on $B$:
\[q!:\Gamma^\omega(\cD_Q)\to \Gamma^\omega(\cD_B).\]
Because $\cD_Q$ is canonically isomorphic to $q^*\cD_B\otimes \cD_q$,  each normalised fibred density defines a right inverse to $q!$.
If $Q$ is a principal $G$-bundle, then the fiberwise Haar density $\mu_G$ canonically defines one such right inverse. If $\chi\in \cD_Q$,
then $q!(\chi)=\int_G f d\mu_G\cdot \rho$, where $\rho$ is any Haar density on $\cD_B$, 
 and $\chi=f\cdot q^*\rho\otimes \mu_G$.

More generally, let $H$ be a Lie group with a bi-invariant density $\mu_H$. Let $Q$ and $R$ be analytic
manifolds on
which $H$ acts freely. The $H$-orbits of the action on $Q$ define a locally
trivial fibration: $\varpi:Q\times_H R\to Q/H $.
Any local slice to the action of $H$ on $R$ trivialises $\varpi$. The trivialisation furnished by different
slices differ by fiberwise conjugation. Therefore the bi-invariant density $\mu_H$ induces a canonical trivialisation of the 
bundle of fibred volume forms/fiberwise
canonical bundle,  which we denote
by $\mu_H\in \Gamma^\omega(K_H)/\mathrm{Hol}(K_H)$.

We apply the previous considerations to both  $\cG=P\times_G (P\times E)$ and $\cH=\PP\times_\GG (\PP\times \EE)$ 
together with the Haar measure $\mu_G$ on $G$ and its complexification $\mu_\GG$ on $\GG$. The 
result are the fibrations 
\[\varpi\circ \pi_1:\cG\to \cO\times \cG_0,\qquad \varpi\circ \pi_1:\cH\to \OO\times \cH_0\]
and the fibred Haar densities: $\mu_G\in \Gamma^\omega(K_G), \mu_\GG\in \mathrm{Hol}(K_\GG)$. The pair $(K_\GG,\mu_\GG)$ is the 
complexification of $(K_G,\mu_G)$.  The bundle  $\cD_s\to \cG$ is the pullback of the
bundle of fibred densities of the trivial bundle $\cO\times \cG_0\to \cG_0$ tensored with $K_G$. Upon complexifying,
we get the isomorphism of holomorphic line bundles over $\cH^r$
\[\cL\cong \overline{\varpi}^*\cD_\OO\otimes K_\GG,\]
where $\overline{\varpi}:\cH^r\to \OO$ is the composition of $\varpi$ with the first projection onto $\OO$.

Let ${\cH^r}^{(d)}_{\cK}\subset {\cH^r}^{(d)}$ denote the tuples of $d$ composable arrows 
in $\cH^r$ where the first $d-1$ arrows  of a tuple belong to $\cK$.

\begin{proposition}\label{pro:holomorphic}
\quad
 \begin{enumerate}
 \item
The integration map defined by the s-proper core  $\cK$ sends holomorphic densities to holomorphic functions:
  \[\int_\cK:\mathrm{Hol}(\cL)\to \mathrm{Hol}(\cH^r_0).\]
  
   \item If $\mu$ is a real analytic Haar system on $\cG$, there exists $r_\mu>0$
   such that the complexification $\mu_\C$ defines a holomorphic
   Haar system on $\cK\subset \cH^r$, for $0<r<r_\mu$:
  \[\int_{\cK}{\mu_\C} :\mathrm{Hol}(\cH^r)\to \mathrm{Hol}(\cH^r_0).\]
  Under the identification $\cL|_{\cK}\cong \cD_{s_\cK}\otimes \underline{\C}$,  $\mu_\C$ maps to a section of $\cD_{s_\cK}$.
  \item
 If $f\in \mathrm{Hol}(\cH^r,\C^m)$,
 then \[\int_{\cK}(m^*f)\mu_\C\in \mathrm{Hol}({\cH^r}^{(d-1)}_{\cK},\C^m).\]
 \end{enumerate}
  \end{proposition}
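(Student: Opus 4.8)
The plan is to reduce all three parts to the single fact, already available from Lemma~\ref{lem:analytic-integration}, that the core $\cK$ defines a \emph{real analytic} integration map, and then upgrade "real analytic" to "holomorphic" by exhibiting the integration as fiberwise averaging by a compact group $G$ acting holomorphically. The key structural input is the isomorphism $\cL\cong\overline{\varpi}^*\cD_\OO\otimes K_\GG$ over $\cH^r$: it splits the line bundle $\cL$ into a piece $\overline{\varpi}^*\cD_\OO$ pulled back from $\OO$ (hence constant along $s_\cK$-fibres, which are contained in a single $\overline{\varpi}$-fiber since $\cK=(P\times\cK\cN\times\EE^r)/G$ has $\overline{\varpi}$ equal to the $\cK\cN$-factor composed with $\pi$) and the piece $K_\GG$, whose fiberwise Haar trivialisation $\mu_\GG$ is holomorphic. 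So first I would record that, along $s_\cK$-fibres, the source fibration of $\cK$ is modelled on the $G$-principal bundle $P\to\cO$ (as shown in the proof of the Proposition that $\cK$ is an s-proper core: the $s_\cK$-fiber at $[\kappa,v]$ is identified with $P$), and that under $\cL\cong\overline{\varpi}^*\cD_\OO\otimes K_\GG$ a holomorphic section of $\cL$ restricts along each $s_\cK$-fiber to (a fixed multiple of) a holomorphic function times the Haar density $\mu_\GG=\mu_G$ of the compact group.

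For part (1): a section $\sigma\in\mathrm{Hol}(\cL)$ restricts to $\cK$; using the splitting, write $\sigma|_\cK=\overline{\varpi}^*\alpha\otimes(h\cdot\mu_G)$ where $\alpha$ is a (locally defined, $\OO$-pulled-back) holomorphic density on $\OO$ and $h$ is a holomorphic function on $\cK$. Integrating over the $s_\cK$-fiber $\cong P$ against $\mu_G$ gives $\int_P h\,d\mu_G$, which is holomorphic in the remaining (base, i.e.\ $\cH^r_0$) variables because averaging a holomorphic function by a compact group acting holomorphically is holomorphic (differentiate under the integral sign; $\mu_G$ is a fixed measure independent of the base point). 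The $\overline{\varpi}^*\alpha$ factor contributes a holomorphic function of $\cH^r_0$ as well since $\overline{\varpi}$ factors through $\cH^r_0$... — actually one must be slightly careful: $\overline{\varpi}$ maps $\cH^r$ to $\OO$ and $\cK$ to the $\cK\cN$-part, so along $s_\cK$ it is constant, and the net density on $\cH^r_0$ obtained from $\overline{\varpi}^*\cD_\OO$ after fiber integration is real analytic with holomorphic dependence; combining with Lemma~\ref{lem:analytic-integration}, which already gives real analyticity, and the holomorphic dependence just established, one concludes $\int_\cK\sigma\in\mathrm{Hol}(\cH^r_0)$ by Cauchy–Riemann.

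For part (2): start with a real analytic Haar system $\mu$ on $\cG$, a real analytic section of $\cD_{s_\cK}$ (over the real core $K=(P\times\cK\cN\times E^r)/G$, say — on the real points) with unit fiber mass and right $\cK$-invariance. Its complexification $\mu_\C$ is a holomorphic section of $\cL$ defined on $\cK\subset\cH^r$ for $r<r_\mu$ small enough (the radius of convergence of the analytic continuation; this is exactly where $r_\mu$ enters). Properties (1) and (2) of an analytic Haar density — unit integral along $s_\cK$-fibres and right $\cK$-invariance — hold on the real points by hypothesis and extend by analytic continuation (both sides are holomorphic functions on $\cH^r$ resp.\ $\cH^r_0$ agreeing on a totally real submanifold). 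Property (3), the regularity-preservation $\int_\cK\mu_\C:\mathrm{Hol}(\cH^r)\to\mathrm{Hol}(\cH^r_0)$, follows from part (1): multiplying $f\in\mathrm{Hol}(\cH^r)$ by $\mu_\C\in\mathrm{Hol}(\cL)$ gives a holomorphic section of $\cL$, to which part (1) applies. The change-of-base clause is handled identically after pulling back along $t\circ\pi_1$. The last sentence, that $\mu_\C$ maps to a section of $\cD_{s_\cK}$ under $\cL|_\cK\cong\cD_{s_\cK}\otimes\underline\C$, is the real structure being respected, again by analytic continuation from the real points where $\mu$ is a genuine density.

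For part (3): this is the "iterated averaging" case needed by the averaging theorem; the point is that composing the first $d-1$ arrows of a tuple with the last one via $m$, pulling back $f$, and integrating the first $d-1$ arrows over copies of $\cK$ against $\mu_\C$ should land in holomorphic functions of the remaining data ${\cH^r}^{(d-1)}_\cK$. I would do this by induction on $d$: the base case $d=1$ (or $d=2$) is just part (2) applied with the base changed along the submersion $t\circ\pi_1:{\cH^r}^{(i)}\to\cH^r_0$ — this is precisely the "change of base" property, and the excerpt notes it is applied to exactly these submersions. For the inductive step, Fubini for fibred densities lets one integrate one $\cK$-factor at a time: after integrating the $k$-th $\cK$-factor using part (2) over the appropriate pulled-back core, the result is holomorphic in the remaining composable-arrow data, and one repeats. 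The local-groupoid associativity axiom (the "no escape" property of cores guarantees all the relevant multiplications are defined on ${\cH^r}^{(d)}_\cK$) ensures the iterated composition $m(k_1,\dots,k_{d-1},p)$ makes sense and the various Fubini rearrangements are legitimate.

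The main obstacle I expect is part (2)'s honest verification of the regularity-preservation property together with pinning down $r_\mu$ — i.e.\ making sure the complexification of the real Haar density $\mu$ actually lives on a neighbourhood of the form $\cK\subset\cH^r$ for a uniform $r$, rather than on an a priori shrinking neighbourhood that might not contain any $\cK\cN\times_G\EE^r$; this is a genuine "common radius of analytic continuation" issue and is the analytic heart of the statement. Everything else is bookkeeping: the holomorphicity of compact-group averaging (part 1) is classical, and part (3) is Fubini plus induction once the change-of-base version of part (2) is in hand.
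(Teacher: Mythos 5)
Your overall architecture (reduce everything to part (1) via the splitting $\cL\cong\overline{\varpi}^*\cD_\OO\otimes K_\GG$, analytic continuation from the real points for (2), change of base plus iteration for (3)) matches the paper's, but the key step of part (1) --- holomorphy of the fibrewise integral --- is not actually established, and that is where the real content of the proposition lies. The $s_\cK$-fibre of $\cK$ is a copy of $P$, a totally real submanifold of the corresponding $s$-fibre of $\cH^r$, and it is \emph{not} the orbit of a compact group acting holomorphically on $\cH^r$. Consequently, for a fixed fibre point $p\in P$ the integrand $z\mapsto f([p,\kappa,v])$ is only real analytic in $z=[\kappa,v]\in\cH^r_0$ (the slice $\cK\cN\times\EE^r$ does not provide holomorphic coordinates, and $z\mapsto[p,\kappa,v]$ is not a holomorphic section), so ``differentiate under the integral sign against the fixed measure'' only reproduces the real analyticity already given by Lemma \ref{lem:analytic-integration}; it cannot yield the Cauchy--Riemann equations, which come from a cancellation that happens only \emph{after} averaging. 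The classical principle you invoke --- the average of a holomorphic function over a compact group acting holomorphically is holomorphic --- is indeed the right ingredient, but exhibiting the fibre integral as such an average is exactly the missing argument. Also, writing $\sigma|_\cK=\overline{\varpi}^*\alpha\otimes(h\cdot\mu_G)$ with ``$h$ holomorphic on $\cK$'' is not meaningful, since $\cK$ carries no complex structure; the paper instead writes $\chi=f\cdot\overline{\varpi}^*\rho_\C\otimes\mu_\GG$ with $f\in\mathrm{Hol}(\cH^r)$ defined globally on $\cH^r$.

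The paper fills this gap by a two-stage factorization of $\int_{\cK_z}$. First it pulls $f$ back to the total space $\cI\subset TP^r\times TP^r\times\EE^r$ of the tube foliation, where $G$ genuinely acts holomorphically on the first factor, so the $G$-average of the pullback is holomorphic on $\cI$ by the classical principle. Bi-invariance of $\mu_G$ makes this average constant on the totally real core $G$ of each $TG^{\varrho}$-tube, hence, by the identity principle, constant on the whole tube; it therefore descends to a holomorphic function on the leaf space $\OO\times\cH^r_0$, which carries the quotient complex structure, and this descended function is $\int_G f|_{\cK}\mu_G$. Only then does one integrate over the compact totally real cycle $\cO$ against the fixed density $\rho$, which preserves holomorphy in $z$. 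Without this passage through $\cI$ and the bi-invariance/descent step, the holomorphy claim in your part (1) is unsupported; parts (2) and (3), which you correctly reduce to (1) (your analytic-continuation treatment of (2) and your Fubini/induction treatment of (3) are essentially the paper's, modulo its extra care that the relevant neighbourhood of ${\cH^r}^{(d)}_{\cK}$ in ${\cH^r}^{(d)}$ is not literally a base change and must be cut out by an auxiliary radial function), inherit the gap.
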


\begin{proof}
Let us fix $\rho$ a density on $\cO$ and let $\rho_\C$ denote its complexification.
In order to have $\rho_\C$ defined everywhere we may need to shrink $\OO$ and for that it is enough to start with a small enough
radius $r>0$.  We obtain 
a trivialisation $\overline{\varpi}^*\rho_\C\otimes \mu_\GG\in \mathrm{Hol}(\cL)$.
If $\chi\in \mathrm{Hol}(\cL)$, then we write 
\[\chi= f\cdot \overline{\varpi}^*\rho_\C\otimes \mu_\GG,\qquad f\in \mathrm{Hol}(\cH^r).\]
If $z\in \cH^r_0$, then by Lemma \ref{lem:analytic-integration}  $\int_{\cK_z}\chi$ is the result of restricting $\chi$ to $\cK_z$, using the canonical
identification $\cL|_z\cong \cD_{\cK_z}\otimes \underline{\C}$, and integrating the resulting complex valued
density. Because $\overline{\varpi}:\cK_z\to \cO$ is a fibration and because $\mu_\GG$ is bi-invariant,
the canonical identification $\cL|_{\cK_z}\equiv \cD_{\cK_z}\otimes \underline{\C}$
maps $\overline{\varpi}^*\rho_\C\otimes \mu_\GG$ to
$\overline{\varpi}^*\rho\otimes \mu_G$. Therefore:
\[\int_{\cK_z}\chi=\int_{\cO}\left(\int_G f|_{\cK_z}\mu_G\right)\rho.\]
Let $F$ be the pullback of $f$ to $\cI$. This is a holomorphic function
which is constant on the leaves/tubes of the foliation:
\[ \cI=TG^{\varrho}\cdot (\cK\cN\times \cI_s)\subset TP^r\times \cI_s.\]
The manifold $\cI\subset TP^r\times TP^r\times \EE^r$ is invariant by the holomorphic action of $G$ on the first factor.
The $G$-average 
of $F$ is a holomorphic function on $\cI$. Because $\mu_G$ is bi-invariant
this function is constant on the core $G$ of the tube  $TG^{\varrho}\cdot(\kappa,\zeta,v)$ over $z$. Therefore 
 it is constant on the $TG^{\varrho}$-tube and descends to a holomorphic function on $\OO\times \cH^r_0$. This
function is $\int_G f|_{\cK}\mu_G$, and therefore 
\[\int_{\cK}\chi=\int_{\cO}\left(\int_G f|_{\cK}\mu_G\right)\rho\]
is a holomorphic function on $\cH^r_0$, and this proves (1). 

If $\mu$ is a real analytic Haar system on $\cG$, then for $0<r<r_\mu$ the complexification
of $\mu$ is defined everywhere. By (1) $\int_\cK \mu_\C$ is a holomorphic function whose value on $\cG$ is 1. Thus
it is the constant function 1 on $\cH^r$. 

Because $\mu$ is right invariant, $\mu_\C$ is also right invariant. As the trivialisation 
of $\cL|_\cK\cong \cD_{s_{\cK}}\otimes \underline{\C}$ uses the right action of fibres in $\cK$, it follows that
$\mu_\C$ is  a (real valued) density, i.e., a section of $\cD_{s_\cK}\otimes 1\subset \cD_{s_\cK}\otimes \underline{\C}$.

If $\phi:Z\to \cH_0$ is a base change, then  $\phi^*\cL\cong \phi^*\overline{\varpi}^*\cD_\OO\otimes K_\GG$
is trivialised by $\phi^*\varpi^*\rho_\C\otimes \mu_\GG$, 
and therefore the same arguments apply.

A small variation of the change of base applies to the multiplication on the left by $\cK$ several times.
Because $m({\cH^r}^{(d)}_{\cK})\subset \cH^r$ and the multiplication map is continuous, there exists
a neighbourhood of ${\cH^r}^{(d)}_{\cK}$ in ${\cH^r}^{(d)}$ sent into $\cH^r$ by the multiplication map.
This neighbourhood is not exactly the base change of $\cH^r\to \cH^r_0$ by the target map
of ${\cH^r}^{(d)}$. Such smaller neighbourhood can be constructed using  a `radial'
function which depends on the radial functions and distance to the boundary 
of the points of 
the neighbourhood for ${\cH^r}^{(d-1)}$. For such smaller neighbourhood the above arguments apply.

For the proof for $\C^m$-valued holomorphic maps one replaces $\underline{\C}$ by $\underline{\C}^m$.
\end{proof}

\begin{remark} (Local versus global associativity)
In a local Lie group(oid) local associativity refers to associativity for the multiplication of three elements. Global
associativity refers to the multiplication of any finite number $d$ of elements (all $d$-fold products are equal). In general local associativity does
not imply global associativity \cite[Sections 1 and 2]{Go}. In our setting as $\cH^r$ is a subset of a holomorphic groupoid local associativity
implies global associativity. The neighbourhoods of 
${\cH^r}^{(d)}_{\cK}$ in ${\cH^r}^{(d)}$ we consider are built by multiplying a $d$-tuple from right to left.
\end{remark}

\section{Average against s-proper cores and convergence of the iterations}\label{sec:average}
In this section  Zung's construction to correct almost morphisms of s-proper Lie groupoids to compact groups  \cite[Section 2.4]{Zu} 
is generalised as follows:  Almost morphisms  from local analytic groupoids with s-proper cores supporting an analytic Haar density
to (local) Lie groups, are corrected to gain the morphism property with respect to the left multiplication by arrows in the core. 
This implies the compatibility of Zung's construction with $C^\w$-structures
defined on saturated open subsets.

Let $H$ be a Lie group and let $|\cdot |$ be a norm on $\h$ normalised so that on the closed unit ball $B_1(0)$ the exponential map is a diffeomorphism
and $|[u,v]|\leq |u||v|$. 
There are constants depending on the BCH formula such that for vectors $u,v\in B_1(0)$:
\[\begin{aligned}& |\log(\exp(u)\exp(v))-(u+v)|\leq c|u||v|, & c=c(\h,b),\\
   & |\log(\exp(u)\exp(v))|\leq c'|u+v|, & c'=c'(\h,b),\\
& |\exp(u)\exp(v)\exp(-u)|\leq c''(|v|+|v||u|), & c''=c''(\h,b).
\end{aligned}\] 
On $H$ we define a Finsler metric by \emph{left} translation of the norm in $\h$.
The defect of $\exp$ and $\log$ from being isometries between $B_1(0)$ and $B_1(e)$ is measured by Lipschitz 
constants:
\[d\leq |\exp|\leq d', \qquad d=d(b,1),\,d'=d'(b,1).\] 
Let $W\subset H$ be an open neighbourhood of the identity containing $B_1(e)$, and let $K$ be a compact subset whose interior contains the closure of $W$.
For each $h\in K$ consider the adjoint action of $h$ on $H$. The maximum of the norm of the derivative over $B_1(e)$ 
measures how far the map is from being an isometry. We define $c_l:=c_l(K,b)$ to be the maximum of such constants over $h\in K$.
We define $c_d:=c_d(b)$ to be the distance from $\overline{W}$ to the boundary of $K$.
We have:
\[K\cdot B_{1/c_l}(e)\cdot K^{-1}\subset B_1(e),\qquad B_{1/c_d}(e)\cdot W\subset K.\]
 
Let $\cH\tto M$ be a local  groupoid. Let $\cK\subset \cH$ be an s-proper core for $\cH$ together with an analytic Haar density $\mu$. If
$\phi:\cH\to H$ is an analytic map, then following Zung (but using left instead of right multiplication)  we define:
\[\psi:\cH^{(2)}_\cK\to H,\qquad \psi(k,p)=\phi(p)^{-1}\phi(k)^{-1}\phi(kp),\qquad \Delta(\phi)=d(\psi(\cH^{(2)}_\cK),e).\]
If $\Delta(\phi)\leq 1/c_l$, then we define the averaging against $\cK$ and the correction by the averaging:
\[A_\cK(\phi)=\exp\left(\int_{\cK}\log(\psi) \mu\right),\qquad \hat{\phi}=\phi\cdot A_\cK(\phi).\]
 \renewcommand\thetheo{\Alph{theo}}% locally redefine the representation of the theorem counter

\begin{theo}\label{thm:average} Let $H$ be a Lie group endowed with a norm $h$ on its lie algebra as above, 
and let $W\subset K$ be subsets of $H$ as above.  
 
 There exists a constant $C:=C(H,h,W,K)>0$,
such that 
\begin{itemize}
 \item for any local analytic groupoid $\cH$ 
endowed with an s-proper core $\cK\subset \cH$,
\item  for any
Haar density $\mu$ on $\cK$, and
\item  for any analytic function $\phi:\cH\to H$ such
that $\phi(\cH)\subset W$ and $\Delta(\phi)\leq C$,
\end{itemize}
the sequence of iterates of the operator which corrects $\phi$ by 
$A(\phi)$ converges in the $C^0$-norm over compacts to a map:
\[\Phi:\cH\to H,\qquad 
\Phi(kp)=\Phi(k)\Phi(p),\quad \forall (k,p)\in \cH^{(2)}_\cK.\]

\end{theo}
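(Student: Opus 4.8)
The plan is to mimic Zung's contraction-mapping argument, but carried out entirely with the operator $A_\cK$ that averages only against the core $\cK$, and to keep careful track of where the Haar-density property (3) (``integration preserves regularity'', including under base change) is invoked so that the limit map inherits analyticity on any open saturated set. First I would record the two basic estimates that make the scheme contract. Writing $\psi = \psi_\phi$ for the defect of $\phi$ on $\cH^{(2)}_\cK$ and $\Delta=\Delta(\phi)$, one shows, using the three BCH inequalities together with the invariance of $\mu$ along the right $\cK$-action and $\int_{\cK_z}\mu^z=1$, that
\[
d\bigl(A_\cK(\phi)(p),e\bigr)\;\le\; c'\,\Delta \qquad (p\in\cH),
\]
so that $\hat\phi$ still maps $\cH$ into $K$ provided $\Delta$ is below $1/c_d$ times a constant depending on $c',d,d'$. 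The crucial estimate is the \emph{quadratic decay of the defect}: one computes $\psi_{\hat\phi}(k,p)$ in terms of $\psi_\phi$ and of $A_\cK(\phi)$, uses the cocycle-type identity
$\psi_\phi(k,p)=\psi_\phi(k,p)$ translated by the relation
$\phi(kp)=\phi(k)\phi(p)\psi_\phi(k,p)$ (and the analogous relation after one more left multiplication by an arrow of $\cK$, which makes sense by axiom (3) of a core and the ``no escape'' property), and then invokes the averaged defect being small together with the adjoint-distortion constant $c_l$. The outcome is an inequality of the shape
\[
\Delta(\hat\phi)\;\le\; C_0\,\Delta(\phi)^2
\]
with $C_0=C_0(H,h,W,K)$; this is the groupoid analogue of the fact that a near-morphism becomes a much-nearer morphism after one averaging step.

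Granting these two estimates, I would set $C:=\min\{1/c_l,\ 1/(2C_0)\}$ (shrinking further if needed so that all intermediate maps stay inside $K$ and so that the hypotheses $\Delta\le 1/c_l$ of the definition of $A_\cK$ remain valid along the whole sequence). Starting from $\phi_0=\phi$ with $\Delta(\phi_0)\le C$ and setting $\phi_{n+1}=\widehat{\phi_n}$, the quadratic estimate gives $\Delta(\phi_n)\le C_0^{-1}(C_0 C)^{2^n}\to 0$ doubly exponentially, while the first estimate gives $d\bigl(\phi_{n+1}(p),\phi_n(p)\bigr)=d\bigl(A_\cK(\phi_n)(p),e\bigr)\le c'\Delta(\phi_n)$, which is summable; hence $(\phi_n)$ is uniformly Cauchy on all of $\cH$ (not merely on compacts), and \emph{a fortiori} in $C^0$ over compacts. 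Let $\Phi=\lim_n\phi_n$. Passing to the limit in $\phi_n(kp)=\phi_n(k)\phi_n(p)\psi_{\phi_n}(k,p)$ on $\cH^{(2)}_\cK$ and using $\Delta(\phi_n)\to0$, i.e.\ $\psi_{\phi_n}\to e$ uniformly, yields $\Phi(kp)=\Phi(k)\Phi(p)$ for all $(k,p)\in\cH^{(2)}_\cK$, which is the assertion. Analyticity of each $\phi_n$ is preserved at every step: $\psi_{\phi_n}$ is analytic by composition with $m$ (defined on $\cH^{(2)}_\cK$ by the ``no escape'' axiom), $\log\psi_{\phi_n}$ is analytic since $\Delta(\phi_n)\le1/c_l<1$ keeps values in the domain of $\log$, and then $\int_\cK(\log\psi_{\phi_n})\mu$ is analytic by property (3) of the Haar density, so $A_\cK(\phi_n)$ and hence $\phi_{n+1}$ are analytic; in the complex-analytic case the same chain of implications produces holomorphic maps with a \emph{common} domain $\cH^r$, which is exactly the point exploited later.

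The main obstacle is the quadratic estimate $\Delta(\hat\phi)\le C_0\Delta(\phi)^2$. In the compact-group case this is a clean computation because one averages over a group and the defect lives on $G\times G$; here the defect lives on $\cH^{(2)}_\cK$, the averaging is only along the $\cK$-direction, and to compare $\psi_{\hat\phi}(k,p)$ with an \emph{averaged} version of $\psi_\phi$ one must multiply on the left by \emph{two} arrows of the core and repeatedly use the core's right-homogeneity (axiom (2): the right $\cK$-action identifies $s_\cK$-fibres) together with the right-invariance of $\mu$ to move the averaging integral past the multiplication. This forces one to work on ${\cH}^{(3)}_\cK$ and to know that the relevant triple products stay inside $\cH$ (the refined ``no escape'' neighbourhoods, and in the holomorphic case Proposition \ref{pro:holomorphic}(3), which guarantees $\int_\cK(m^*f)\mu_\C$ remains holomorphic on ${\cH^r}^{(d-1)}_\cK$). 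Controlling the resulting error terms uniformly — and in particular making the constant $C_0$ depend only on $(H,h,W,K)$ and not on the groupoid or the core — is where the BCH constants $c,c',c''$ and the adjoint constant $c_l$ all enter, and it is the technical heart of the argument; everything else is the standard contraction bookkeeping.
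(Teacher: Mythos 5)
Your proposal follows essentially the same route as the paper: the paper simply imports Zung's iteration scheme (a bound on the correction plus quadratic decay of the defect, the explicit $q(C)\sim C^2$), with exactly the two modifications you single out — using the right-invertibility/homogeneity of the $s_\cK$-fibres to move the average past left multiplication by core arrows, and the extra adjoint-distortion constant $c_l$. The remaining bookkeeping (choice of $C$, doubly exponential decay of $\Delta(\phi_n)$, uniform Cauchy convergence, and passage to the limit to get $\Phi(kp)=\Phi(k)\Phi(p)$ on $\cH^{(2)}_\cK$) matches the paper's argument.
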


\begin{proof}
We have
$|A_\cK(\phi)|\leq \frac{d}{d'}\cdot C$. Let $\hat{\psi}=\hat{\phi}A_\cK(\hat{\phi})$.
The proof of Zung --with two minor variations-- applies verbatim to show the existence of a polynomial $q$  with
$|\hat{\psi}|\leq q(C)$ (and $q(0)=0$). The first variation 
is the use of the right invertibility of $s_\cK$-fibres so that \cite[Equation (2.29)]{Zu} still holds. The second
variation is the introduction of an extra  multiplicative constant in \cite[Equation (2.32)]{Zu}
which accounts for the failure of the adjoint action of $K$ on $B_1(e)$ to be an isometry.

More precisely, we can choose:
\[ q(C)=\frac{2\cdot c \cdot c_l (d'\cdot c_l+2d'+c\cdot c_l\cdot C)}{{d'}^2}\cdot C^2.\]
Thus, there exists a choice of $C>0$ such that the iterations $\phi_{n+1}:=\hat{\phi_n}$ satisfy
\[|A_\cK(\phi_n)|\leq \frac{1}{c_l},\qquad |\phi_{n+1}\phi_n^{-1}|\leq  \frac{1}{c_d}\]
and $\phi_n$ converges in the $C^0$-norm to $\Phi: \cH\to K\subset H$
\[\Phi(kp)=\Phi(k)\Phi(p),\quad \forall (k,p)\in \cH^{(2)}_\cK.\]
\end{proof}
\begin{remark} There is a version of \ref{thm:average} for maps $\phi:\cH\to H$ where exhaustions of $H$ by
compact subsets and of $\cH$ by local analytic subgroupoids have to be used. 
\end{remark}

As announced, we can now prove the compatibility of Zung's iteration with $C^\w$ structures:
\begin{corollary}\label{cor:zunganal} Let $G$ be a compact Lie group endowed with an $\Ad$-invariant inner product $g$
on its lie algebra. There exists a constant $C:=C(K,g)>0$
such that
\begin{itemize}
 \item for any Lie groupoid $\cG$ together with and open saturated subset $\cG_U\subset \cG$
endowed with a real analytic structure,
\item  for any
Haar density $\mu$ on $\cG$ which is real analytic on $\cG_U$, and
\item for any function $\phi:\cG\to K$ 
real analytic on $\cG_U$ and such that $\Delta(\phi)\leq C$,
\end{itemize} 
the sequence of iterates of the operator which corrects $\phi$ it by 
$A_\cG(\phi)$ converges in the $C^\infty$-norm over compacts to a morphism of real analytic groupoids:
\[\Phi:\cG\to G,\qquad 
\Phi(qp)=\Phi(q)\Phi(p),\quad \forall (p,q)\in \cG^{(2)}.\]
\end{corollary}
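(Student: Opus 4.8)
The plan is to deduce Corollary~\ref{cor:zunganal} by combining the real version of Zung's iteration (the $C^\infty$ convergence and the morphism property, which is already essentially Theorem~\ref{thm:average} specialised to an honest compact group $G=K$ acting as a genuine, globally defined, associative target) with the holomorphic averaging machinery of Section~\ref{sec:models} applied to the complexification of the data restricted to $\cG_U$. First I would invoke the known (smooth) statement: with $C=C(K,g)$ as produced in Theorem~\ref{thm:average} (here $W=K$ is legitimate since $G$ is compact and one may rescale $g$ so that the relevant balls behave), the iterates $\phi_{n+1}=\hat{\phi_n}$ converge in $C^\infty$ over compacts to a smooth groupoid morphism $\Phi:\cG\to G$; this uses only that $\cG$ is a Lie groupoid with a (smooth) Haar density and that $\Delta(\phi)\le C$. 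So the only thing left to prove is that $\Phi|_{\cG_U}$ is \emph{real analytic}, i.e. that the limit of the sequence of real analytic maps $\phi_n|_{\cG_U}$ is real analytic.

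For the analyticity I would follow the qualitative route advertised in the introduction. Fix a point $x$ of an orbit $\cO$ contained in $\cG_U$; near $\cO$ the groupoid $\cG_U$ is, up to the real analytic structure, isomorphic to the linear model $\cG\tto\cG_0=(P\times P\times E)/G'$ of Section~\ref{sec:models} (note the isotropy group here, call it $G'$, is a closed subgroup of the target $G$ but plays the role of the ``$G$'' in that section). Choosing a real analytic representative of this model, we may form for small $r>0$ the local holomorphic groupoid $\cH^r\tto\cH^r_0$, the s-proper core $\cK=(P\times\cK\cN\times\EE^r)/G'$, and, by Proposition~\ref{pro:holomorphic}(2), the holomorphic Haar density $\mu_\C$ complexifying $\mu|_{\cG_U}$, provided $r<r_\mu$. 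The map $\phi:\cG\to G$, being real analytic on $\cG_U$, complexifies (shrinking $r$ if necessary) to a holomorphic map $\phi_\C:\cH^r\to\GG$ with $\phi_\C(\cH^r)$ in a prescribed neighbourhood of $W=G$ in $\GG$ on which $\log$ and the Finsler estimates make sense; shrinking once more we may assume $\Delta(\phi_\C)\le C$ as well. Now the holomorphic version of the averaging operator $A_\cK$ is well defined on $\cH^r$: by Proposition~\ref{pro:holomorphic}(1) and (3), $\psi_\C(k,p)=\phi_\C(p)^{-1}\phi_\C(k)^{-1}\phi_\C(kp)$ is holomorphic on ${\cH^r}^{(2)}_\cK$, $\log\psi_\C$ is holomorphic there, and $A_\cK(\phi_\C)=\exp\!\big(\int_\cK\log(\psi_\C)\,\mu_\C\big)$ is a holomorphic function on $\cH^r_0$, hence $\hat{\phi_\C}=\phi_\C\cdot A_\cK(\phi_\C)$ is holomorphic on $\cH^r$. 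Thus the whole iteration takes place inside the category of holomorphic maps on a \emph{fixed} $\cH^r$ (the point of Proposition~\ref{pro:holomorphic}(3) is exactly that no further shrinking of $r$ is forced at each step, because the relevant neighbourhoods of ${\cH^r}^{(d)}_\cK$ are built by multiplying from the right and stay inside $\cH^r$), and Theorem~\ref{thm:average} applied to $\cH^r$ with its s-proper core $\cK$ and Haar density $\mu_\C$ yields a holomorphic limit $\Phi_\C:\cH^r\to\GG$ with $\Phi_\C(kp)=\Phi_\C(k)\Phi_\C(p)$ for $(k,p)\in{\cH^r}^{(2)}_\cK$, the convergence being uniform on compacts, hence in particular $C^0$.

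Finally I would match the two limits. On $\cG=\cH^r\cap(\text{real points})$ we have $\phi_n|_{\cG}\to\Phi|_{\cG}$ in $C^0$ (from the smooth statement) and $\phi_{\C,n}|_{\cG}=\phi_n|_\cG\to\Phi_\C|_\cG$ in $C^0$ (from the holomorphic statement restricted to real points), because the complexified averaging operator restricts on real points to the real averaging operator — this uses that $\mu_\C$ restricts to $\mu$, that $A_\cK$ commutes with the antiholomorphic involution, and that the $\cK$ of Section~\ref{sec:models} restricts to the honest s-proper core $G'\ltimes(\cdot)$ of the real linear model. Hence $\Phi_\C|_\cG=\Phi|_\cG$, so $\Phi$ coincides near $\cO$ with the restriction to real points of the holomorphic map $\Phi_\C$, which means $\Phi$ is real analytic near $\cO$. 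Since $\cO\subset\cG_U$ was an arbitrary orbit and real analyticity is local, $\Phi|_{\cG_U}$ is real analytic; being also a smooth groupoid morphism it is a morphism of real analytic groupoids, which is the assertion. The main obstacle, and the part requiring genuine care rather than bookkeeping, is the \emph{uniformity of the domain} $\cH^r$ along the iteration: one must be sure that the constant $C$ of Theorem~\ref{thm:average}, together with the radius $r$ (governed by $r_\mu$ and by where $\phi_\C$ and $\log$ are defined), can be chosen once and for all so that every iterate stays holomorphic on the same $\cH^r$ with $\Delta(\phi_{\C,n})\le C$; this is precisely what Proposition~\ref{pro:holomorphic}(3) and the ``local $\Rightarrow$ global associativity'' remark are there to guarantee, and assembling them into a clean invariant for the iteration is the crux of the argument.
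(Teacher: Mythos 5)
Your outline is essentially the paper's proof: obtain the smooth morphism $\Phi$ from the real iteration, localise at each orbit in $U$ via the real analytic linear model, complexify, run the iteration holomorphically on a fixed $\cH^r$ against the core $\cK$ with $\mu_\C$ (Proposition~\ref{pro:holomorphic} plus Theorem~\ref{thm:average}), and identify the holomorphic limit with $\Phi$ on real points. Two points in your write-up need repair before it matches the actual argument. First, Theorem~\ref{thm:average} only gives $C^0$ convergence; the $C^\infty$ convergence of the real iterates is Zung's theorem \cite[Section 2.5]{Zu}, not a specialisation of Theorem~\ref{thm:average} with $W=K=G$ (which is in any case degenerate, since the constants $c_d$ there require $\overline W$ to lie in the interior of $K$). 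Accordingly the constant of the corollary must be chosen as in the paper, namely $\tfrac12\min\{C(\GG,g\oplus g,\overline{TG^2},TG^1),\,C_Z(K,g)\}$: one term governs the holomorphic iteration on the complexification (target $\GG$, with tubes $TG^1\subset \overline{TG^2}$ playing the roles of $W\subset K$), the other governs Zung's smooth convergence, and the factor $\tfrac12$ supplies the slack you implicitly use when you ``shrink once more'' to force $\Delta(\phi_\C)$ below the holomorphic threshold.

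Second, and more substantively, your justification of the matching step misidentifies the real trace of the core. The restriction of $\cK=(P\times\cK\cN\times\EE^r)/G'$, together with $\mu_\C$, to the real locus is the \emph{whole} real linear model $(\cG_V,\mu|_{\cG_V})$: since $P\subset\cK\cN$, the $s_\cK$-fibres over real points are exactly the full source fibres of $\cG_V$. It is not ``the honest s-proper core $G'\ltimes(\cdot)$ of the real linear model'' (that description is only accurate when the orbit is a fixed point). This fact is what makes the holomorphic averaging against $\cK$ restrict, on real points, to Zung's averaging against the entire groupoid, which is what the real iteration uses; if the real trace of $\cK$ were a proper subgroupoid, the two iterations would not coincide on $\cG_V$, and the identification $\Phi^r_\C|_{\cG_V}=\Phi|_{\cG_V}$ --- the crux of the analyticity claim --- would fail. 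With these two corrections your argument coincides with the paper's proof.
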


\begin{proof}
Let  $TG^1, TG^2\subset \GG$ be the tubes of radius one and two, say. We define
\[C:=\frac{1}{2}\mathrm{min}\{C(\GG,g\oplus g,\overline{TG^2},TG^1),C_Z(K,g)\},\]
where the first constant comes from Theorem \ref{thm:average} 
and $C_Z(K,g)$ is the constant in Zung's theorem needed for converge on the $C^\infty$ norm \cite[Section 2.5]{Zu}.

We fix $\cH_U$ a small enough representative of the germ of the complexification of $\cG_U$ so that the the following properties hold:
\begin{enumerate}[(i)]
 \item the complexification of $\phi|_{G_U}:\cG_U\to G$, which we denote
by $\phi_\C$, 
is everywhere defined and has image inside $TG^1$;
 \item $|\Delta(\phi|_\C)|\leq C$.
 \item the complexification of $\mu$ is everywhere defined: $\mu_\C\in\mathrm{Hol}(\cL)$.
\end{enumerate}
Because $C\leq C_Z(K,g)$ the sequence $\phi_{n}$ (obtained using averaging against the whole $\cG$) 
converges in the $C^\infty$-norm to 
the Lie groupoid morphism $\Phi:\cG\to G$. To show
that $\Phi|_{\cG_U}$ is real analytic we choose any point $y\in U$. We let $\cG_V$
denote the restriction of $\cG$ to $V$, which is a small enough saturated neighbourhood of 
the leaf through $y$. The normal form Theorem for s-proper Lie groupoids \cite{CS} implies that:
\begin{equation}\label{eq:proplin}
 \cG_V\tto V\cong (P\times P\times E^r)/G\tto P\times_G E^r.
\end{equation}
It is easy to check that the isomorphism (ref{eq:proplin}) built in \cite{CS} is compatible with real analytic data which is
\emph{everywhere} defined on $\cG_V$. 

We let $\cH^r_V$ be the representative of the germ of complexification 
of $\cG_V$ constructed in subsection \ref{ssec:representative}. The radius $r>0$ is chosen so that $\cH^r_V$ is biholomorphic to a subset of $\cH$. Thus
the triple $(\cK\subset \cH^r_V,\phi_\C|_{\cH^r_V},\mu_\C|_{\cH^r_V})$ is in the hypotheses
of Theorem \ref{thm:average} in the holomorphic category. Therefore
the sequence of holomorphic maps which is obtained out of $\phi_\C|_{\cH^r_V}$
converges in the $C^0$-norm over compacts to a holomorphic map
\[\Phi^r_\C: \cH^r_V\to \GG.\]
The restriction of the core $\cK\subset \cH^r_V$ and the holomorphic Haar measure to $\cG_V$ is 
by construction $(\cG_V,\mu|_{\cG_V})$. Hence $\Phi^r_\C|_{\cG_V}=\Phi|_{\cG_V}$. This shows 
that $\Phi$ is real analytic on $\cG_U$.

Observe that a fortiori  $\Phi^r_\C$ is a morphism of holomorphic groupoids (averaging against the 
core $\cK$ in principle only grants the morphism property when left multiplying by
arrow in $\cK$).

\end{proof}

\section{Proof of Theorem \ref{thm:main}}\label{sec:proof}

The (saturated) semilocal normal forms for proper Lie groupoids are built in two steps \cite{CS}: the first one
is the linearization of an s-proper Lie groupoid around a fixed point. The second step  
amounts to the use of appropriate Morita equivalences. We only need to clarify the compatibility
of the latter step with $C^w$ structures defined on saturated open subsets.

The following lemma is probably well-known to experts:
\begin{lemma}\label{lem:palais} Let $G$ be a compact Lie group acting on submersion $\pi:P\to E$, where  $E$ is a vector
space and the action of $G$ on $E$ is orthogonal.
If $G\ltimes (\pi:P\to E)$ admits a compatible $C^\w$ structure on an open subset $W$ saturated both by $s$-fibres and $G$-orbits,
then the compatible $C^\w$ structure extends to $G\ltimes (\pi:P\to E)$.
\end{lemma}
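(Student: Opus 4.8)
The plan is to exploit the product structure of the action groupoid $G\ltimes(\pi\colon P\to E)$, together with the fact that $E$ is a vector space with a linear $G$-action, to promote the given real analytic structure on the saturated open set $W$ to all of $\cG:=G\ltimes P$ by a fibrewise averaging and retraction argument. First I would recall that the arrow manifold of $G\ltimes(\pi\colon P\to E)$ is $G\times P$, with source the second projection, target the action, and base $P$; since $\pi\colon P\to E$ is a submersion and $E$ is linear, the set $W$ being saturated by $s$-fibres and $G$-orbits means $W$ is the preimage of a $G$-invariant open subset of $E$ under $\pi\circ s$ (equivalently under $\pi\circ t$), so it has the form $G\times \pi^{-1}(U)$ for a $G$-invariant open $U\subset E$.

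The key step is to build a $G$-equivariant real analytic ``radial'' retraction that pushes $P$ into $\pi^{-1}(U)$. Since $E$ is a vector space with orthogonal $G$-action and $U$ is $G$-invariant and open (and may be assumed to contain the origin, which is the only possibility if $W$ meets the fixed leaf, or else one replaces $E$ by a suitable invariant slice), one can choose a $G$-equivariant real analytic diffeomorphism $\lambda\colon E\to U'$ onto a slightly smaller $G$-invariant open set $U'\subset U$; for instance using the flow of a $G$-invariant real analytic vector field on $E$ that is radial and compactly supported modulo the action, or simply $v\mapsto \chi(|v|)v$ for an appropriate $G$-invariant analytic reparametrisation. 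Because $\pi$ is a submersion, $\lambda$ lifts to a $G$-equivariant real analytic diffeomorphism $\Lambda\colon P\to \pi^{-1}(U')$ over $\lambda$ covering the identity on the structure group: concretely one integrates a $G$-invariant lift of the generating vector field (the $G$-invariant real analytic structure on $\pi^{-1}(U)$ and a partition-of-unity/averaging argument gives such a lift), and $\Lambda$ is the time-one flow. Then $(\Lambda\times\Lambda,\Lambda)$, acting on arrows by $(g,p)\mapsto(g,\Lambda(p))$ on $G\times P$, is an isomorphism of $C^\infty$ groupoids from $G\ltimes(\pi\colon P\to E)$ onto its restriction $G\ltimes(\pi\colon \pi^{-1}(U')\to U')$, which sits inside $W$; pulling back the real analytic structure of $W$ along this isomorphism produces a real analytic structure on all of $G\ltimes(\pi\colon P\to E)$, and one checks the structural maps ($s,t,m,$ inversion, unit) are real analytic because they are so on $W$ and the isomorphism is built from the analytic maps $\lambda,\Lambda$.

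The main obstacle I expect is \emph{equivariance of the lift}: producing a $G$-equivariant real analytic diffeomorphism $\Lambda\colon P\to\pi^{-1}(U')$ covering $\lambda$. On $W=G\times\pi^{-1}(U)$ one has a real analytic structure, but $P$ itself is only given as $C^\infty$ (it is the $C^\infty$ compact principal-bundle-type manifold of the normal form), so one cannot simply ``analytically continue'' the lift across $P\setminus\pi^{-1}(U)$. The way around this is to note that the lift is needed only as a $C^\infty$ object globally on $P$ that is \emph{real analytic where it matters}: it suffices that $\Lambda$ be a $C^\infty$ $G$-equivariant diffeomorphism onto $\pi^{-1}(U')\subset W$ which is real analytic as a map into the real analytic manifold $W$; such a $\Lambda$ is obtained by flowing along a $G$-invariant vector field on $P$ that is $\pi$-projectable to the radial field on $E$, chosen (via averaging over the compact group $G$ against Haar measure, exactly the Weyl-trick mechanism used throughout the paper) to be real analytic on the saturated analytic piece and supported there. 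Once $\Lambda$ lands in $W$ and is real analytic into $W$, the pulled-back structure on $P$ is unambiguous and compatible, completing the extension. The remaining verifications---that the two groupoid structures glue, that source, target and multiplication become real analytic, and that the result is compatible with (i.e.\ induces the same $C^\infty$ structure as) the original one---are routine diagram chases using that all maps involved are restrictions of real analytic maps.
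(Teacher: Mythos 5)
Your reduction to an equivariant ``compression'' of $E$ into $U$ is where the argument breaks, and it breaks precisely in the case the lemma is needed for. In the application (the proof of Theorem \ref{thm:main}), the saturated analytic piece is $W=\pi^{-1}(U)$ with $U\subset E$ a $G$-invariant open set whose \emph{closure} contains the origin but which does \emph{not} contain it: the whole point is to extend the analytic structure across the central orbit, which lies in the boundary of the region where the structure is already defined. Since the $G$-action on $E$ is linear, the origin is a fixed point, and any $G$-equivariant map $\lambda\colon E\to U'\subset U$ would have to send it to a fixed point of $U$; if $E^G\cap U=\emptyset$ (e.g.\ $G=SO(n)$ acting on $E=\R^n$ and $U$ a punctured ball) no such map exists, let alone an equivariant diffeomorphism onto an open subset, and no lift $\Lambda\colon P\to\pi^{-1}(U')$ can exist either. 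More generally, orbit-type and topological obstructions prevent $P$ (with its $G$-action and fibration over $E$) from being equivariantly diffeomorphic to an open subset of $\pi^{-1}(U)$, so no scheme that pulls the structure back from inside $W$ along a global diffeomorphism can prove the lemma. Your parenthetical ``may be assumed to contain the origin \ldots or else one replaces $E$ by a suitable invariant slice'' papers over exactly this point. There are secondary problems as well: a vector field ``supported'' in the analytic piece has identity flow outside it, so its time-one map cannot move all of $P$ into $\pi^{-1}(U')$; and ``real analytic as a map into $W$'' is not meaningful on $P\setminus W$, where $P$ carries only a smooth structure --- what is actually needed, and not established, is that $\Lambda$ be bi-analytic on the overlap $W$ so that the pulled-back structure restricts to the given one there.

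For comparison, the paper's proof goes in the opposite direction: instead of retracting $P$ into the analytic region, it extends outward by approximation. One embeds $P$ equivariantly and closedly into an orthogonal representation $E'$ (Mostow--Palais), arranges by Illman's density theorem for equivariant $C^\omega$ maps that the embedding is already analytic on $W$, and then, working with the product embedding $\iota=\pi\times i\colon P\to E\times E'$ fibred over $E$, replaces the smooth equivariant Gauss-type map to the Grassmann bundle by a nearby equivariant analytic one; transversality produces an analytic equivariant submanifold near $\iota(P)$, and the fibred orthogonal retraction transports the analytic structure back to $P$, matching the given one over $W$. If you want a proof along your lines you would have to abandon the compression idea and adopt some such relative equivariant approximation mechanism.
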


\begin{proof}
Because $G$ is compact by the Mostow-Palais Theorem  there exists an orthogonal representation
$G\ltimes E'$ and a close equivariant embedding  $i:P\to E'$. Because $C^\w$ equivariant
morphisms are dense in the very strong
topology \cite[Theorem 15.5]{Il2}
we may assume that the embedding is analytic on $W$.

The product embedding $\iota:=\pi\times i:P\to E\times E'$ is equivariant with respect to  the diagonal
action on the product of vector spaces.  Because $\iota$
is transverse to the first projection $\pi_1:E\times E'\to E$, we may take  a small tubular neighbourhood $\nu$ 
of $\iota(P)$ with  respect to the fiberwise Euclidean metric in $E\times E'\to E$.

Let $c$  be the codimension of $\iota(P)$
and let $\U$ be the universal bundle of the Grassmann (real analytic) manifold of planes of dimension $c$ in $E\times E'$.
The fibred Pontryagin map 
$l:\nu\to \U$
is  equivariant,
its image lies in the subbundle $\U_{E}$ of planes orthogonal to $E$, and it is transverse to its zero section. 
We take  $\Theta\in C^\omega(\nu, \U_E)^G$ close enough to $l$, so that it is still transverse to the zero section. The analytic
submanifold $\Theta^{-1}(\underline{0})$ is  transverse to $\pi_1$. The fibred orthogonal retraction
 $\lambda:\Theta^{-1}(\underline{0})\to i(P)$ is an equivariant diffeomorphism. Its inverse is analytic on $\iota(W)$.
 
The composition $\lambda\circ \iota^{-1}$ defines is a compatible analytic structure  $G\ltimes(\pi:P\to E)$ which extends
the given one on $W$.
\end{proof}

We are ready to prove our main Theorem:
\begin{proof}[Proof of Theorem \ref{thm:main}]

By Zorn's lemma it is enough to show that a compatible analytic structure on a saturated open subset $\cG_V$ different from $\cG$
can be extended to a larger open saturated subset. If $V$ is a connected component of $M$, then the construction of the extension
is straightforward. Let $x\in \bar{V}\backslash V$. We take $\Sigma\to M$ a small slice to the orbit foliation through $x$. We may assume without loss of generality that
$\Sigma\cap V$ is analytic: a slice $\Sigma$ can be constructed as the fiber of a local submersion $q$ which is
transverse to the characteristic distribution. We may replace the restriction of $q$ to $V$ by an analytic map close enough
in the very strong topology. 

The Lie groupoid $\cG_\Sigma\to \Sigma$ is s-proper  and has a compatible analytic structure on $\cG_{\Sigma\cap V}$.
We endow it with a Haar density $\mu$ which can be assumed to be analytic on $\cG_{\Sigma\cap V}$. Let $G$ be the isotropy group
of $x$ and let $g$ be an inner product on $g$. 
Upon reducing $\Sigma$, we may assume that we have a submersion $\phi:\cG_\Sigma\to G$ so that
$\Delta(\phi)\leq C/2$, where $C=C(G,g)$ in corollary \ref{cor:zunganal}. 
We may replace $\phi$ over $\cG_{\Sigma\cap V}$ by a $C^\w$ map close enough in the strong topology,
so that we can assume that  $\phi$  
is a submersion which is $C^\w$ on $\cG_{\Sigma\cap V}$ and satisfies
$\Delta(\phi)\leq C$. 

Because we are in the hypotheses of Corollary \ref{cor:zunganal} we obtain 
\[\Phi:\cG\to G\]
a morphism of Lie groupoids which 
 is $C^\w$ on $\cG_{\Sigma\cap V}$. If
 $P:=s^{-1}(\Sigma)$, then we have a group $G$ of bisections of $\cG_\Sigma$ which acts on $s:P\to \Sigma$ in a compatible way with the
 analytic structure on $s^{-1}(\Sigma\cap V)$.
Upon reducing  $\Sigma$ we may assume that it is a vector space with a linear action as follows: we embed
the $G$-space $\Sigma$ in an orthogonal representation space $i:\Sigma \to E$ in a compatible fashion with the analytic structure on $\Sigma \cap V$.
The tangent space to $i(y)$ is an invariant $G$-module, and therefore the orthogonal projection from $i(\Sigma)$ to the tangent
space is a map of $G$-spaces, which is analytic on $i(\Sigma\cap V)$.

By Lemma \ref{lem:palais} we can extend the compatible analytic structure 
from $G\ltimes (s^{-1}(\Sigma\cap V)\to \Sigma\cap V)$ to $G\ltimes (s:P\to \Sigma)$.
If we denote by $Z$ the saturation of $\Sigma$ by orbits, then we obtain a compatible analytic structure on 
\[\cG_{Z}\cong  P\times_{G\ltimes \Sigma} P\]
which matches the compatible analytic structure on $\cG_{Z\cap V}$.

\end{proof}

\begin{remark} (Lower regularity)
The following result holds: 

For any Lie proper Lie groupoid  $\cG\tto M$ of class $C^k$, $k\geq 3$,
there exists a $C^\w$  structure compatible with the $C^{k-1}$-structure.

To improve the regularity from $C^k$ to $C^\infty$ we apply Zung's linearization Theorem, which 
is  valid in the $C^k$ class, $0\leq k<\infty $. A degree of regularity is lost because as initial data one needs a Haar density
of class $C^k$. Such densities come from sections of the Lie algebroid, which is of class $C^{k-1}$. 

The requirement $k\geq 3$ is explained as follows: Zung's linearization Theorem applies to s-proper groupoids with fixed points.
To show that the groupoid $G_\Sigma$ coming from a slice is s-proper one actually needs to resort to either of the proofs
of the linearization
Theorem which use an infinitesimal approach \cite{CS,HF}. There, an additional degree of regularity is lost
(for example in \cite{CS} this is a consequence of applying the Alexander trick).
 \end{remark}

 \end{document}